\newcommand{\yosi}[2]{\ensuremath{\sideset{^{#2}}{}{\operatorname{}}\!\!#1}}
\newcommand{\minimize}[2]{\ensuremath{\underset{\substack{{#1}}}%
{\mathrm{minimize}}\;\;#2 }}
\newcommand{\Scal}[2]{\bigg\langle{#1}\;\bigg|\:{#2}\bigg\rangle} 
\newcommand{\scal}[2]{{\left\langle{{#1}\mid{#2}}\right\rangle}}
\newcommand{\menge}[2]{\big\{{#1}~\big |~{#2}\big\}} 
\newcommand{\Menge}[2]{\left\{{#1}~\Big|~{#2}\right\}}
\newcommand{\KKK}{\ensuremath{\boldsymbol{\mathcal K}}}
\newcommand{\HH}{\ensuremath{{\mathcal H}}}
\newcommand{\GG}{\ensuremath{{\mathcal G}}}
\newcommand{\emp}{\ensuremath{{\varnothing}}}
\newcommand{\Id}{\ensuremath{\operatorname{Id}}\,}
\newcommand{\cart}{\ensuremath{\raisebox{-0.5mm}{\mbox{\LARGE{$\times$}}}}}
\newcommand{\Cart}{\ensuremath{\raisebox{-0.5mm}{\mbox{\huge{$\times$}}}}}
\newcommand{\RR}{\ensuremath{\mathbb{R}}}
\newcommand{\RP}{\ensuremath{\left[0,+\infty\right[}}
\newcommand{\RPP}{\ensuremath{\left]0,+\infty\right[}}
\newcommand{\RPX}{\ensuremath{\left[0,+\infty\right]}}
\newcommand{\RX}{\ensuremath{\left]-\infty,+\infty\right]}}
\newcommand{\RXX}{\ensuremath{\left[-\infty,+\infty\right]}}
\newcommand{\NN}{\ensuremath{\mathbb N}}
\newcommand{\KK}{\ensuremath{\mathcal K}}
\newcommand{\weakly}{\ensuremath{\:\rightharpoonup\:}}
\newcommand{\exi}{\ensuremath{\exists\,}}
\newcommand{\ran}{\ensuremath{\operatorname{ran}}}
\newcommand{\zer}{\ensuremath{\operatorname{zer}}}
\newcommand{\pinf}{\ensuremath{{+\infty}}}
\newcommand{\dom}{\ensuremath{\operatorname{dom}}}
\newcommand{\prox}{\ensuremath{\operatorname{prox}}}
\newcommand{\gra}{\ensuremath{\operatorname{gra}}}
\newcommand{\sri}{\ensuremath{\operatorname{sri}}}
\newcommand{\reli}{\ensuremath{\operatorname{ri}}}
\newcommand{\infconv}{\ensuremath{\mbox{\small$\,\square\,$}}}
\newcommand{\zeroun}{\ensuremath{\left]0,1\right[}}
\newtheorem{theorem}{Theorem}[section]
\newtheorem{proposition}[theorem]{Proposition}
\theoremstyle{plain}{\theorembodyfont{\rmfamily}%
\newtheorem{example}[theorem]{Example}}
\theoremstyle{plain}{\theorembodyfont{\rmfamily}%
\newtheorem{remark}[theorem]{Remark}}
\theoremstyle{plain}{\theorembodyfont{\rmfamily}%
}
\theoremstyle{plain}{\theorembodyfont{\rmfamily}%
}
\theoremstyle{plain}{\theorembodyfont{\rmfamily}%
}
\theoremstyle{plain}{\theorembodyfont{\rmfamily}%
\newtheorem{problem}[theorem]{Problem}}
\numberwithin{equation}{section}
\begin{document}

\title{\sffamily\huge Primal-dual splitting algorithm for solving
inclusions with mixtures of composite, Lipschitzian, and 
parallel-sum type monotone operators\footnote{Contact author: 
P. L. Combettes, {\ttfamily plc@math.jussieu.fr},
phone: +33 1 4427 6319, fax: +33 1 4427 7200.
This work was supported by the Agence Nationale de la 
Recherche under grants ANR-08-BLAN-0294-02 and ANR-09-EMER-004-03.}}

\author{Patrick L. Combettes$^1$ and Jean-Christophe 
Pesquet$^2$\\[5mm]
\small $\!^1$UPMC Universit\'e Paris 06\\
\small Laboratoire Jacques-Louis Lions -- UMR CNRS 7598\\
\small 75005 Paris, France\\
\small \url{plc@math.jussieu.fr}\\[4mm]
\small $\!^2$Universit\'e Paris-Est\\
\small Laboratoire d'Informatique Gaspard Monge -- UMR CNRS 8049\\
\small 77454 Marne la Vall\'ee Cedex 2, France\\
\small \url{jean-christophe.pesquet@univ-paris-est.fr}
}

\date{~}

\maketitle

\vskip 8mm

\begin{abstract} \noindent
We propose a primal-dual splitting algorithm for solving monotone
inclusions involving a mixture of sums, linear compositions, and 
parallel sums of set-valued and Lipschitzian operators. 
An important feature of the algorithm is
that the Lipschitzian operators present in the formulation can
be processed individually via explicit steps, while the set-valued 
operators are processed individually via their resolvents. In 
addition, the algorithm is highly parallel in that most of its 
steps can be executed simultaneously. This work brings together and 
notably extends various types of structured monotone inclusion 
problems and their solution methods. The application to 
convex minimization problems is given special attention.
\end{abstract} 

{\bfseries Keywords} 
maximal monotone operator, 
monotone inclusion, 
nonsmooth convex optimization,
parallel sum, 
set-valued duality,
splitting algorithm 

{\bfseries Mathematics Subject Classifications (2010)} 
47H05, 49M29, 49M27, 90C25, 49N15.

\maketitle

\newpage

\section{Introduction}

Duality theory occupies a central place in classical optimization
\cite{Fenc53,Gols71,More66,Rock67,Rock74}. Since the mid 1960s it 
has expanded in various directions, e.g., variational 
inequalities \cite{Aldu05,Ekel99,Gaba83,Gohc02,Konn03,Mosc72}, 
minimax and saddle point problems 
\cite{Lebe67,Mcli74,More64,Rock64}, and,
from a more global perspective, monotone inclusions
\cite{Atto96,Reic05,Siop11,Ecks99,Merc80,Penn00,Robi99}.
In the present paper, we propose an algorithm for solving the 
following structured duality framework for monotone inclusions
that encompasses the above cited works.
In this formulation, we denote by $B\infconv D$ the parallel sum 
of two set-valued operators $B$ and $D$ (see \eqref{e:parasum}).
This operation plays a central role in convex analysis and monotone 
operator theory. In particular, $B\infconv D$ can be seen as a
regularization of $B$ by $D$, and $\infconv$ is naturally connected 
to addition through duality since $(B+D)^{-1}=B^{-1}\infconv D^{-1}$.
It is also strongly related to the infimal convolution of functions
through subdifferentials.
We refer the reader to \cite{Livre1,Luqu86,Moud96,Pass86,Seeg90} 
and the references therein for background on the parallel sum.

\begin{problem}
\label{prob:1}
Let $\HH$ be a real Hilbert space, let $z\in\HH$, let $m$ be a 
strictly positive integer, 
let $A\colon\HH\to 2^{\HH}$ be maximally monotone, and let
$C\colon\HH\to\HH$ be monotone and $\mu$-Lipschitzian for some
$\mu\in\RPP$. For every $i\in\{1,\ldots,m\}$, let $\GG_i$ be a 
real Hilbert space, let $r_i\in\GG_i$, let 
$B_i\colon\GG_i\to 2^{\GG_i}$ 
be maximally monotone, let $D_i\colon\GG_i\to 2^{\GG_i}$ 
be monotone and such that $D_i^{-1}$ is $\nu_i$-Lipschitzian, for 
some $\nu_i\in\RPP$, and suppose that $L_i\colon\HH\to\GG_i$ is 
a nonzero bounded linear operator. 
The problem is to solve the primal inclusion
\begin{equation}
\label{e:mprimal}
\text{find}\quad\overline{x}\in\HH\;\;\text{such that}\;\;
z\in A\overline{x}+\sum_{i=1}^mL_i^*\big((B_i\infconv D_i)
(L_i\overline{x}-r_i)\big)+C\overline{x},
\end{equation}
together with the dual inclusion
\begin{equation}
\label{e:mdual}
\text{find}\;\overline{v_1}\in\GG_1,\:\ldots,\:
\overline{v_m}\in\GG_m
\;\:\text{such that}\;
(\exi x\in\HH)\;
\begin{cases}
z-\sum_{i=1}^mL_i^*\overline{v_i}\in Ax+Cx\\
(\forall i\in\{1,\ldots,m\})\;\overline{v_i}
\in(B_i\infconv D_i)(L_ix-r_i).
\end{cases}
\end{equation}
\end{problem}

Problem~\ref{prob:1} captures and extends various existing problem
formulations. Here are some examples that illustrate its
versatility and the breadth of its scope. 

\begin{example}
\label{ex:2011-06-22i}
In Problem~\ref{prob:1} set
\begin{equation}
\label{e:2011-06-22}
m=1,\quad
C\colon x\mapsto 0,\quad\text{and}\quad
D_1\colon y\mapsto
\begin{cases}
\GG_1,&\text{if}\;\;y=0;\\
\emp,&\text{if}\;\;y\neq 0.
\end{cases}
\end{equation}
Then we recover a duality framework investigated in 
\cite{Siop11,Ecks99,Penn00,Robi99}, namely (we drop
the subscript `1' for brevity),
\begin{equation}
\label{e:2011-06-16b}
\text{find}\;\;(\overline{x},\overline{v})\in\HH\oplus\GG\;\;
\text{such that}\;
\begin{cases}
z\in A\overline{x}+L^*\big(B(L\overline{x}-r)\big)\\
-r\in -L\big(A^{-1}(z-L^*\overline{v})\big)+B^{-1}\overline{v}.
\end{cases}
\end{equation}
\end{example}

\begin{example}
\label{ex:2011-06-22ii}
In Example~\ref{ex:2011-06-22i}, let $\GG=\HH$, $r=z=0$, and 
$L=\Id$. Then we obtain the duality setting of 
\cite{Atto96,Merc80}, i.e.,
\begin{equation}
\label{e:2011-06-22c}
\text{find}\;\;(\overline{x},\overline{u})\in\HH\oplus\HH\;\;
\text{such that}\;
\begin{cases}
0\in A\overline{x}+B\overline{x}\\
0\in-A^{-1}(-\overline{u})+B^{-1}\overline{u}.
\end{cases}
\end{equation}
The special case of variational inequalities was first treated in
\cite{Mosc72}.
\end{example}

\begin{example}
\label{ex:2011-06-22iii}
In Example~\ref{ex:2011-06-22i}, let $A$ and $B$ be the 
subdifferentials of lower semicontinuous convex functions 
$f\colon\HH\to\RX$ and $g\colon\GG\to\RX$, respectively. Then, 
under suitable constraint qualification, we obtain the classical 
Fenchel-Rockafellar duality framework \cite{Rock67}, i.e.,
\begin{equation}
\label{e:2010-11-18f}
\begin{cases}
\minimize{x\in\HH}{f(x)+g(Lx-r)-\scal{x}{z}}\\[3mm]
\minimize{v\in\GG}{f^*(z-L^*v)+g^*(v)+\scal{v}{r}}.
\end{cases}
\end{equation}
\end{example}

\begin{example}
\label{ex:2011-06-22iv}
In Problem~\ref{prob:1}, set $C\colon x\mapsto 0$, 
$z=0$, and $(\forall i\in\{1,\ldots,m\})$ $\GG_i=\HH$, 
$r_i=0$, $L_i=\Id$, and $D_i=\rho_i^{-1}\Id$, where 
$\rho_i\in\RPP$. Then 
it follows from \cite[Proposition~23.6(ii)]{Livre1} that,
for every $i\in\{1,\ldots,m\}$, 
$B_i\infconv D_i=(\Id-J_{\rho_iB_i})/\rho_i=\yosi{B_i}{\rho_i}$
is the Yosida approximation of index $\rho_i$ of $B_i$. Thus,
\eqref{e:mprimal} reduces to
\begin{equation}
\label{e:mprimal-1}
\text{find}\quad\overline{x}\in\HH\;\;\text{such that}\quad
0\in A\overline{x}+\sum_{i=1}^m\yosi{B_i}{\rho_i}\overline{x}.
\end{equation}
This primal problem is investigated in \cite[Section~6.3]{Opti04}.
In the case when $m=1$, we obtain the primal-dual problem (we drop
the subscript `1' for brevity)
\begin{equation}
\label{e:2011-06-16z}
\text{find}\;\;(\overline{x},\overline{u})\in\HH\oplus\HH\;\;
\text{such that}\;
\begin{cases}
0\in A\overline{x}+\yosi{B}{\rho}\overline{x}\\
0\in -A^{-1}(-\overline{u})+B^{-1}\overline{u}+\rho\overline{u}
\end{cases}
\end{equation}
investigated in \cite{Reic05}.
\end{example}

\begin{example}
\label{ex:2011-06-30}
In Problem~\ref{prob:1}, set $m=1$, $\GG_1=\GG$, $L_1=L$,
$z=0$, and $r_1=0$, and
let $A$ and $B_1$ be the subdifferentials of lower semicontinuous 
convex functions $f\colon\HH\to\RX$ and $g\colon\GG\to\RX$, 
respectively. In addition, let $C$ be the gradient of a 
differentiable convex function $h\colon\HH\to\RR$, 
and let $D$ be the subdifferential of a lower semicontinuous
strongly convex function $\ell\colon\GG\to\RX$.
Then, under suitable constraint qualification, \eqref{e:mprimal}
assumes the form of the minimization problem
\begin{equation}
\minimize{x\in\HH}{f(x)+(g\infconv\ell)(Lx)+h(x)},
\end{equation}
which can be rewritten as
\begin{equation}
\minimize{x\in\HH,\,y\in\GG}{f(x)+h(x)+g(y)+\ell(Lx-y)}.
\end{equation}
In the special case when $h=0$, $\GG=\HH$, $L=\Id$, and $\ell$ is a
quadratic coupling function, such formulations have been 
investigated in \cite{Acke80,Atto07,Aujo05,Cham97,Smms05}.
\end{example}

\begin{example}
\label{ex:2011-06-22v}
In Problem~\ref{prob:1}, set $m=1$, $\GG_1=\HH$, $L_1=\Id$, 
$B_1=D_1^{-1}\colon x\mapsto\{0\}$, and $z=r_1=0$. Then 
\eqref{e:mprimal} yields the inclusion 
$0\in A\overline{x}+C\overline{x}$ studied in \cite{Tsen00},
where an algorithm using explicit steps for $C$ was proposed.
\end{example}

\begin{example}
\label{ex:2011-06-22vi}
In Problem~\ref{prob:1}, set $A\colon x\mapsto\{0\}$ and $C=\Id$. 
Furthermore, for every $i\in\{1,\ldots,m\}$, let $B_i$ be the 
subdifferential of a lower semicontinuous convex function 
$g_i\colon\GG_i\to\RX$ and let $D_i^{-1}\colon y\mapsto\{0\}$.
Then, under suitable constraint qualification, we obtain the 
primal-dual pair considered in \cite{Bang11}, namely
\begin{equation}
\label{e:pvn}
\minimize{x\in\HH}{\sum_{i=1}^mg_i(L_ix-r_i)+
\frac{1}{2}\|x-z\|^2}
\end{equation}
and
\begin{equation}
\label{e:dvn}
\minimize{v_1\in\GG_1,\ldots,\, v_m\in\GG_m}
{\frac12\left\|z-\sum_{i=1}^mL_i^*v_i\right\|^2+\sum_{i=1}^m
\big(g_i^*(v_i)+\scal{v_i}{r_i}\big)}.
\end{equation}
\end{example}

\begin{example}
\label{ex:2011-06-22vii}
The special case of Problem~\ref{prob:1} in which 
\begin{equation}
A\colon x\mapsto\{0\},\; C\colon x\mapsto 0,
\quad\text{and}\quad
(\forall i\in\{1,\ldots,m\})\quad
D_i\colon y\mapsto
\begin{cases}
\GG_i,&\text{if}\;\;y=0;\\
\emp,&\text{if}\;\;y\neq 0
\end{cases}
\end{equation}
yields the primal-dual pair 
\begin{equation}
\label{e:2010-11-11x}
\text{find}\quad\overline{x}\in\HH\;\;\text{such that}\quad
z\in\sum_{i=1}^mL_i^*\big(B_i(L_i\overline{x}-r_i)\big)
\end{equation}
and
\begin{equation}
\label{e:eqdualprod}
\text{find}\;\:\overline{v_1}\in\GG_1,\ldots,\overline{v_m}
\in\GG_m\quad\text{such that}\;
\begin{cases}
\sum_{i=1}^mL_i^*\overline{v_i}=z\\
(\exi x\in\HH)(\forall i\in\{1,\ldots,m\})\;\:
\overline{v_i}\in B_i(L_ix-r_i).
\end{cases}
\end{equation}
This framework is considered in \cite[Theorem~3.8]{Siop11}.
\end{example}

Conceptually, the primal problem \eqref{e:mprimal} could be
recast in the form of \eqref{e:2010-11-11x}, namely 
\begin{equation}
\label{e:mprimal-2}
\text{find}\quad\overline{x}\in\HH\;\;\text{such that}\quad
z\in\sum_{i=0}^mL_i^*\big(E_i(L_i\overline{x}-r_i)\big),
\end{equation}
where
\begin{multline}
\GG_0=\HH,\;
E_0=A+C,\;
L_0=\Id,\;
r_0=0,
\quad\text{and}\quad
(\forall i\in\{1,\ldots,m\})\quad E_i=B_i\infconv D_i.
\end{multline}
In turn, one could contemplate the possibility of using the 
primal-dual algorithm proposed in \cite[Theorem~3.8]{Siop11} to 
solve Problem~\ref{prob:1}. However, this algorithm
requires the computation of the resolvents of the operators 
$A+C$ and $(B_i^{-1}+D_i^{-1})_{1\leq i\leq m}$, which are usually
intractable. Thus, for numerical purposes, Problem~\ref{prob:1} 
cannot be reduced to Example~\ref{ex:2011-06-22vii}.
Let us stress that, even in the instance of the simple 
inclusion $0\in A\overline{x}+C\overline{x}$, it is precisely 
the objective of the forward-backward splitting algorithm and 
its variants \cite{Livre1,Smms05,Merc79,Tsen91,Tsen00} to circumvent
the computation of the resolvent of $A+C$, as would impose a naive 
application of the proximal point algorithm \cite{Roc76a}. 

The goal of this paper is to propose a fully split algorithm for 
solving Problem~\ref{prob:1} that employs the operators 
$A$, $(L_i)_{1\leq i\leq m}$, $(B_i)_{1\leq i\leq m}$, 
$(D_i)_{1\leq i\leq m}$, and $C$ separately. 
An important feature of the algorithm is to activate
the single-valued operators $(L_i)_{1\leq i\leq m}$,
$(D_i^{-1})_{1\leq i\leq m}$, and $C$ through explicit steps.
In addition, it exhibits a highly parallel structure which allows 
for the simultaneous activation of the operators involved. 
This new splitting method goes significantly beyond the 
state-of-the-art, which is limited to specific subclasses of 
Problem~\ref{prob:1}.

In Section~\ref{sec:2}, we briefly set our notation. The new
splitting method is proposed in Section~\ref{sec:3}, where we
also prove its convergence. The special case of minimization 
problems is discussed in Section~\ref{sec:4}.

\section{Notation and background}
\label{sec:2}
Our notation is standard. We refer the reader to 
\cite{Livre1,Zali02} for background on convex analysis and monotone 
operator theory. Hereafter, $\KK$ is a real Hilbert space.

We denote the scalar product of a Hilbert space by 
$\scal{\cdot}{\cdot}$ and the associated norm by $\|\cdot\|$. 
The symbols $\weakly$ and $\to$ denote 
respectively weak and strong convergence. Moreover,
$\GG_1\oplus\cdots\oplus\GG_m$ is the Hilbert direct sum of the 
Hilbert spaces $(\GG_i)_{1\leq i\leq m}$ in Problem~\ref{prob:1}, 
i.e., their product space equipped with the norm
$(y_i)_{1\leq i\leq m}\mapsto\sqrt{\sum_{i=1}^m\|y_i\|^2}$. 
For every $i\in\{1,\ldots,m\}$, let $T_i$ be a mapping from 
$\GG_i$ to some set ${\mathcal R}$. Then
\begin{equation}
\label{e:2011-06-30a}
\bigoplus_{i=1}^mT_i\colon\bigoplus_{i=1}^m\GG_i\to{\mathcal R}
\colon(y_i)_{1\leq i\leq m}\mapsto\sum_{i=1}^mT_iy_i.
\end{equation}

Let $M\colon\KK\to 2^{\KK}$ be a set-valued operator.
We denote by $\ran M=\menge{u\in\KK}{(\exi x\in\KK)\;u\in Mx}$ 
the range of $M$, by $\dom M=\menge{x\in\KK}{Mx\neq\emp}$ its
domain, by $\zer M=\menge{x\in\KK}{0\in Mx}$ its set of zeros, 
by $\gra M=\menge{(x,u)\in\KK\times\KK}{u\in Mx}$ its 
graph, and by $M^{-1}$ its inverse, i.e., the set-valued operator 
with graph
$\menge{(u,x)\in\KK\times\KK}{u\in Mx}$. The resolvent of $M$ is
\begin{equation}
J_M=(\Id+M)^{-1}, 
\end{equation}
where $\Id$ denotes the identity operator on $\KK$.
Moreover, $M$ is monotone if
\begin{equation}
(\forall (x,u)\in\gra M)(\forall (y,v)\in\gra M)\quad
\scal{x-y}{u-v}\geq 0,
\end{equation}
and maximally so if there exists no monotone operator 
$\widetilde{M}\colon\KK\to 2^{\KK}$ such that $\gra M\subset\gra
\widetilde{M}\neq\gra M$. 
\label{l:2009-09-20i}
We say that $M$ is uniformly monotone at $x\in\dom M$ if there 
exists an increasing function $\phi\colon\RP\to\RPX$ that vanishes 
only at $0$ such that
\begin{equation}
\label{e:Unifmon}
(\forall u\in Mx)(\forall (y,v)\in\gra M)\quad
\scal{x-y}{u-v}\geq\phi(\|x-y\|).
\end{equation}
The parallel sum of two set-valued operators $M_1$ and $M_2$ 
from $\KK$ to $2^{\KK}$ is 
\begin{equation}
\label{e:parasum}
M_1\infconv M_2=(M_1^{-1}+M_2^{-1})^{-1}.
\end{equation}

We denote by $\Gamma_0(\KK)$ the class of lower semicontinuous 
convex functions $\varphi\colon\KK\to\RX$ such that 
$\dom \varphi=\menge{x\in\KK}{\varphi(x)<\pinf}\neq\emp$. Now let
$\varphi\in\Gamma_0(\KK)$. 
The conjugate of $\varphi$ is the function 
$\varphi^*\in\Gamma_0(\KK)$ defined by 
$\varphi^*\colon u\mapsto\sup_{x\in\KK}(\scal{x}{u}-\varphi(x))$,
and the subdifferential of $\varphi$ is the maximally monotone 
operator
\begin{equation}
\partial\varphi\colon\KK\to 2^{\KK}\colon x\mapsto
\menge{u\in\KK}{(\forall y\in\KK)\;\:\scal{y-x}{u}+\varphi(x)\leq
\varphi(y)}
\end{equation}
with inverse given by
\begin{equation}
\label{e:heidelberg2011-07-03}
(\partial\varphi)^{-1}=\partial\varphi^*.
\end{equation}
Moreover, for every $x\in\KK$, $\varphi+\|x-\cdot\|^2/2$ possesses 
a unique minimizer, which is denoted by $\prox_\varphi x$. We have
\begin{equation}
\label{e:prox2}
\prox_\varphi=J_{\partial \varphi}.
\end{equation}
We say that $\varphi$ is $\nu$-strongly convex for some $\nu\in\RPP$
if $\varphi-\nu\|\cdot\|^2/2$ is convex, and that $\varphi$ is 
uniformly convex at $x\in\dom\varphi$ if there exists an increasing 
function $\phi\colon\RP\to\RPX$ that vanishes only at $0$ such that
\begin{equation}
\label{e:Unifconvex}
(\forall y\in\dom\varphi)(\forall\alpha\in\zeroun)\quad
\varphi(\alpha x+(1-\alpha)y)+\alpha(1-\alpha)\phi(\|x-y\|)\leq
\alpha\varphi(x)+(1-\alpha)\varphi(y).
\end{equation}
The infimal convolution of two functions $\varphi_1$ and 
$\varphi_2$ from $\KK$ to $\RX$ is
\begin{equation}
\label{e:infconv1}
\varphi_1\infconv\varphi_2\colon\KK\to\RXX\colon x\mapsto
\inf_{y\in\KK}\big(\varphi_1(y)+\varphi_2(x-y)\big).
\end{equation}

Finally, let $S$ be a convex subset of $\KK$. The strong 
relative interior of $S$, i.e., the set of points $x\in S$ such 
that the cone generated by $-x+S$ is a closed vector subspace of 
$\KK$, is denoted by $\sri S$, and the relative interior of $S$, 
i.e., the set of points $x\in S$ such that the cone 
generated by $-x+S$ is a vector subspace of $\KK$, is 
denoted by $\reli S$.

\section{Main result}
\label{sec:3}

Our main result is the following theorem, which presents our
new splitting algorithm and describes its asymptotic behavior. 

\begin{theorem}
\label{t:1}
In Problem~\ref{prob:1}, suppose that 
\begin{equation}
\label{e:2011-06-24a}
z\in\ran\bigg(A+\sum_{i=1}^mL_i^*(B_i\infconv D_i)
(L_i\cdot-r_i)+C\bigg).
\end{equation}
Let $(a_{1,n})_{n\in\NN}$, $(b_{1,n})_{n\in\NN}$, and
$(c_{1,n})_{n\in\NN}$ be absolutely summable sequences in $\HH$
and, for every $i \in \{1,\ldots,m\}$, let 
$(a_{2,i,n})_{n\in\NN}$, $(b_{2,i,n})_{n\in\NN}$, and
$(c_{2,i,n})_{n\in\NN}$ be absolutely summable sequences in 
$\GG_i$. Furthermore, set 
\begin{equation}
\label{e:2011-06:25b}
\beta=\max\{\mu,\nu_1,\ldots,\nu_m\}+\sqrt{\sum_{i=1}^m\|L_i\|^2},
\end{equation}
let $x_0\in\HH$, let
$(v_{1,0},\ldots,v_{m,0})\in\GG_1\oplus\cdots\oplus\GG_m$, let 
$\varepsilon\in\left]0,1/(\beta+1)\right[$, 
let $(\gamma_n)_{n\in\NN}$ be a sequence in 
$[\varepsilon,(1-\varepsilon)/\beta]$, and set
\begin{equation}
\label{e:blackpage}
(\forall n\in\NN)\quad 
\begin{array}{l}
\left\lfloor
\begin{array}{l}
y_{1,n}=x_n-\gamma_n\big(Cx_n+\sum_{i=1}^mL_i^*v_{i,n}
+a_{1,n}\big)\\
p_{1,n}=J_{\gamma_n A}(y_{1,n}+\gamma_nz)+b_{1,n}\\
\operatorname{For}\;i=1,\ldots,m\\
\left\lfloor
\begin{array}{l}
y_{2,i,n}=v_{i,n}+\gamma_n\big(L_ix_n-D_i^{-1}v_{i,n}
+a_{2,i,n}\big)\\
p_{2,i,n}=J_{\gamma_n B_i^{-1}}(y_{2,i,n}-\gamma_nr_i)
+b_{2,i,n}\\
q_{2,i,n}=p_{2,i,n}+\gamma_n\big(L_ip_{1,n}-D_i^{-1}p_{2,i,n}
+c_{2,i,n}\big)\\
v_{i,n+1}=v_{i,n}-y_{2,i,n}+q_{2,i,n}.
\end{array}
\right.\\[1mm]
q_{1,n}=p_{1,n}-\gamma_n\big(Cp_{1,n}+\sum_{i=1}^mL_i^*p_{2,i,n}
+c_{1,n}\big)\\
x_{n+1}=x_n-y_{1,n}+q_{1,n}.
\end{array}
\right.\\
\end{array}
\end{equation}
Then the following hold.
\begin{enumerate}
\item
\label{t:1i}
$\sum_{n\in\NN}\|x_n-p_{1,n}\|^2<\pinf$ and 
$(\forall i\in\{1,\ldots,m\})$
$\sum_{n\in\NN}\|v_{i,n}-p_{2,i,n}\|^2<\pinf$.
\item
\label{t:1ii}
There exist a solution $\overline{x}$ to \eqref{e:mprimal} and a 
solution $(\overline{v_1},\ldots,\overline{v_m})$ to 
\eqref{e:mdual} such that the following hold.
\begin{enumerate}
\item
\label{t:1iia}
$z-\sum_{j=1}^mL_j^*\overline{v_j}\in A\overline{x}+C\overline{x}$ 
and $(\forall i\in\{1,\ldots,m\})$
$L_i\overline{x}-r_i\in B_i^{-1}\overline{v_i}
+D_i^{-1}\overline{v_i}$.
\item
\label{t:1iib}
$(\forall i\in\{1,\ldots,m\})$
$-r_i\in -L_i\big((A^{-1}\infconv C^{-1})\big(z-\sum_{j=1}^m
L_j^*\overline{v_j}\big)\big)+B_i^{-1}\overline{v_i}
+D_i^{-1}\overline{v_i}$.
\item
\label{t:1iic}
$x_n\weakly\overline{x}$ and $p_{1,n}\weakly\overline{x}$.
\item
\label{t:1iid}
$(\forall i\in\{1,\ldots,m\})$ $v_{i,n}\weakly\overline{v_i}$ and 
$p_{2,i,n}\weakly\overline{v_i}$.
\item 
\label{t:1iie}
Suppose that $A$ or $C$ is uniformly monotone at $\overline{x}$.
Then $x_n\to\overline{x}$ and $p_{1,n}\to\overline{x}$.
\item 
\label{t:1iif}
Suppose that, for some $i\in \{1,\ldots,m\}$, $B_i^{-1}$ or 
$D_i^{-1}$ is uniformly monotone at $\overline{v_i}$. Then
$v_{i,n}\to\overline{v_i}$ and $p_{2,i,n}\to\overline{v_i}$.
\end{enumerate}
\end{enumerate}
\end{theorem}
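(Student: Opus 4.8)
The plan is to reformulate Problem~\ref{prob:1} as the problem of finding a zero of a sum of two maximally monotone operators acting on the product space $\KKK=\HH\oplus\GG_1\oplus\cdots\oplus\GG_m$, and then to show that the iteration \eqref{e:blackpage} is an instance of an error-tolerant forward-backward splitting scheme applied to that sum, after which the asserted convergence properties follow from the known behaviour of forward-backward iterations. Concretely, introduce on $\KKK$ the set-valued operator $\boldsymbol A\colon(x,v_1,\ldots,v_m)\mapsto(-z+Ax)\times(r_1+B_1^{-1}v_1)\times\cdots\times(r_m+B_m^{-1}v_m)$ and the single-valued operator $\boldsymbol B\colon(x,v_1,\ldots,v_m)\mapsto\big(Cx+\sum_{i=1}^mL_i^*v_i,\;-L_1x+D_1^{-1}v_1,\ldots,-L_mx+D_mx+\cdots\big)$ — more precisely the linear skew part $(x,v_1,\ldots,v_m)\mapsto(\sum_iL_i^*v_i,-L_1x,\ldots,-L_mx)$ plus the block-diagonal cocoercive-type part built from $C$ and the $D_i^{-1}$. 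First I would verify that $\boldsymbol A$ is maximally monotone (as a direct sum of maximally monotone operators, using maximal monotonicity of $A$ and of each $B_i^{-1}$, and translations by constants preserve this), and that a point $(\overline x,\overline v_1,\ldots,\overline v_m)$ lies in $\zer(\boldsymbol A+\boldsymbol B)$ if and only if $\overline x$ solves \eqref{e:mprimal} and $(\overline v_i)_i$ solves \eqref{e:mdual} with the coupling relations in \ref{t:1iia}; the hypothesis \eqref{e:2011-06-24a} is exactly what guarantees $\zer(\boldsymbol A+\boldsymbol B)\neq\emp$.

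The second step is to establish that $\boldsymbol B$ is $\beta$-Lipschitzian on $\KKK$ with $\beta$ as in \eqref{e:2011-06:25b}. This splits into estimating the norm of the skew linear operator $(x,(v_i))\mapsto(\sum_iL_i^*v_i,(-L_ix))$, whose norm is $\sqrt{\sum_i\|L_i\|^2}$, and the block-diagonal part $(x,(v_i))\mapsto(Cx,(D_i^{-1}v_i))$, which is $\max\{\mu,\nu_1,\ldots,\nu_m\}$-Lipschitzian since $C$ is $\mu$-Lipschitzian and each $D_i^{-1}$ is $\nu_i$-Lipschitzian; the triangle inequality in the appropriate operator norm then gives $\beta$. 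One also checks that $\boldsymbol B$ is monotone: the skew part contributes zero to $\scal{\cdot}{\cdot}$, and the block-diagonal part is monotone because $C$ and the $D_i^{-1}$ are. Hence $\boldsymbol A+\boldsymbol B$ is maximally monotone with $\boldsymbol B$ monotone and Lipschitzian, which is precisely the setting in which Tseng's forward-backward-forward splitting method (cf.\ \cite{Tsen00}, and recovered here via Example~\ref{ex:2011-06-22v}) converges, provided the step sizes $\gamma_n$ stay in $[\varepsilon,(1-\varepsilon)/\beta]$.

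The third, and most delicate, step is the bookkeeping: I must show that the concrete recursion \eqref{e:blackpage}, with all its error terms $a_{1,n},b_{1,n},c_{1,n},a_{2,i,n},b_{2,i,n},c_{2,i,n}$, is exactly the error-tolerant Tseng iteration $y_n=x_n-\gamma_n\boldsymbol Bx_n$, $p_n=J_{\gamma_n\boldsymbol A}y_n$, $q_n=p_n-\gamma_n\boldsymbol Bp_n$, $x_{n+1}=x_n-y_n+q_n$, written componentwise on $\KKK$. This requires identifying $J_{\gamma_n\boldsymbol A}$ blockwise: the $\HH$-block is $J_{\gamma_nA}(\cdot+\gamma_nz)$ and the $\GG_i$-block is $J_{\gamma_nB_i^{-1}}(\cdot-\gamma_nr_i)$, which is what appears in the definitions of $p_{1,n}$ and $p_{2,i,n}$. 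The error terms $a_{1,n},(a_{2,i,n})_i$ perturb the first forward step, $b_{1,n},(b_{2,i,n})_i$ perturb the resolvent step, and $c_{1,n},(c_{2,i,n})_i$ perturb the second forward step; I would package $(a_{1,n},(a_{2,i,n})_i)$, $(b_{1,n},(b_{2,i,n})_i)$, $(c_{1,n},(c_{2,i,n})_i)$ as elements $\boldsymbol a_n,\boldsymbol b_n,\boldsymbol c_n$ of $\KKK$, note they are absolutely summable since each component is, and check that the resulting perturbed iteration meets the hypotheses of the convergence theorem for the forward-backward-forward method with summable errors. The main obstacle I anticipate is precisely this verification that \eqref{e:blackpage} coincides term-by-term with the abstract scheme — in particular matching the sign conventions in the skew operator and confirming that $q_{1,n}$ uses $Cp_{1,n}+\sum_iL_i^*p_{2,i,n}$ while $q_{2,i,n}$ uses $L_ip_{1,n}-D_i^{-1}p_{2,i,n}$, i.e.\ that these are the two blocks of $-\gamma_n\boldsymbol Bp_n$ up to the error terms.

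Granting all this, \ref{t:1i} follows from the summability of $\sum_n\|x_n-p_n\|^2$ (a standard consequence of the Fej\'er-type analysis of the forward-backward-forward iteration, reading off the $\HH$- and $\GG_i$-components), \ref{t:1iic} and \ref{t:1iid} from weak convergence of the whole sequence $(x_n,(v_{i,n})_i)$ to a point of $\zer(\boldsymbol A+\boldsymbol B)$, with $p_{1,n}$ and $p_{2,i,n}$ having the same limits because their differences with $x_n$, $v_{i,n}$ go to $0$. Statement \ref{t:1iia} is the componentwise description of membership in $\zer(\boldsymbol A+\boldsymbol B)$; \ref{t:1iib} is obtained from \ref{t:1iia} by using $z-\sum_jL_j^*\overline v_j\in(A+C)\overline x$ together with the identity $(A+C)^{-1}=A^{-1}\infconv C^{-1}$ applied at that point (recalling $B^{-1}\infconv D^{-1}=(B+D)^{-1}$). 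Finally, for \ref{t:1iie} and \ref{t:1iif}, uniform monotonicity of $A$ or $C$ at $\overline x$ (resp.\ of $B_i^{-1}$ or $D_i^{-1}$ at $\overline v_i$) upgrades the relevant block of the monotone operator to being uniformly monotone at the limit, and the standard argument — plugging the weakly convergent iterates into the uniform-monotonicity inequality \eqref{e:Unifmon} against the limit and using that the pairing terms vanish asymptotically — forces $\|x_n-\overline x\|\to0$, resp.\ $\|v_{i,n}-\overline v_i\|\to0$, whence also strong convergence of $p_{1,n}$, resp.\ $p_{2,i,n}$. \endproof
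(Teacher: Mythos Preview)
Your overall strategy is the same as the paper's: lift to the product space $\KKK$, define the maximally monotone $\boldsymbol{M}$ (your $\boldsymbol{A}$) and the monotone $\beta$-Lipschitzian $\boldsymbol{Q}$ (your $\boldsymbol{B}$), verify that $\zer(\boldsymbol{M}+\boldsymbol{Q})\neq\emp$ is equivalent to \eqref{e:2011-06-24a}, and recognize \eqref{e:blackpage} as the error-tolerant forward-backward-forward (Tseng) iteration for this pair. Your Lipschitz estimate, the monotonicity of $\boldsymbol{Q}$, and the blockwise identification of $J_{\gamma_n\boldsymbol{M}}$ are all correct and match the paper; parts \ref{t:1i}--\ref{t:1iid} then follow from the cited convergence theorem (the paper invokes \cite[Theorem~2.5]{Siop11}) exactly as you outline.

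The gap is in your treatment of \ref{t:1iie}--\ref{t:1iif}. Uniform monotonicity of $A$ (or $C$) at $\overline{x}$ does \emph{not} make the product operator $\boldsymbol{M}+\boldsymbol{Q}$ uniformly monotone at $\overline{\boldsymbol{x}}$, because the modulus $\phi_A(\|\cdot-\overline{x}\|)$ controls only the $\HH$-component of the displacement, not the full $\KKK$-norm; so no black-box ``standard argument'' applies. The paper instead works componentwise: it pairs the uniform-monotonicity inequality for $A$ (applied to the de-noised iterate $\widetilde{p}_{1,n}=J_{\gamma_nA}(\widetilde{y}_{1,n}+\gamma_nz)$ and a fixed $u\in A\overline{x}$) with the \emph{plain} monotonicity inequalities for every $B_i^{-1}$ and $D_i^{-1}$. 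The crucial point you are glossing over is that the skew coupling term $\scal{x_n-\overline{x}}{\sum_iL_i^*(\overline{v_i}-v_{i,n})}$ coming from the $\HH$-block and the terms $\sum_i\scal{L_i(x_n-\overline{x})}{\widetilde{p}_{2,i,n}-\overline{v_i}}$ coming from the $\GG_i$-blocks do not vanish separately --- only after summing the two families of inequalities do they combine into $\scal{x_n-\overline{x}}{\sum_iL_i^*(\widetilde{p}_{2,i,n}-v_{i,n})}$, which tends to $0$ because $\widetilde{p}_{2,i,n}-v_{i,n}\to 0$ strongly (from \ref{t:1i}) while $x_n-\overline{x}$ stays bounded. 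Without invoking monotonicity of the \emph{other} blocks to absorb these cross terms, the skew part of $\boldsymbol{Q}$ remains uncontrolled and your argument does not close.
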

\begin{proof}
Let us first rewrite \eqref{e:blackpage} as
\begin{equation}
\label{e:blackpagepart1}
(\forall n\in\NN)\quad 
\begin{array}{l}
\left\lfloor
\begin{array}{l}
y_{1,n}=x_n-\gamma_n\big(Cx_n+\sum_{i=1}^mL_i^*v_{i,n}
+a_{1,n}\big)\\
\operatorname{For}\;i=1,\ldots,m\\
\left\lfloor
\begin{array}{l}
y_{2,i,n}=v_{i,n}+\gamma_n\big(L_ix_n-D_i^{-1}v_{i,n}
+a_{2,i,n}\big)\\
\end{array}
\right.\\[1mm]
p_{1,n}=J_{\gamma_n A}(y_{1,n}+\gamma_nz)+b_{1,n}\\
\operatorname{For}\;i=1,\ldots,m\\
\left\lfloor
\begin{array}{l}
p_{2,i,n}=J_{\gamma_n B_i^{-1}}(y_{2,i,n}-\gamma_nr_i)+b_{2,i,n}\\
\end{array}
\right.\\[1mm]
q_{1,n}=p_{1,n}-\gamma_n\big(Cp_{1,n}+\sum_{i=1}^mL_i^*p_{2,i,n}
+c_{1,n}\big)\\
\operatorname{For}\;i=1,\ldots,m\\
\left\lfloor
\begin{array}{l}
q_{2,i,n}=p_{2,i,n}+\gamma_n\big(L_ip_{1,n}-D_i^{-1}p_{2,i,n}
+c_{2,i,n}\big)\\
\end{array}
\right.\\[1mm]
x_{n+1}=x_n-y_{1,n}+q_{1,n}\\
\operatorname{For}\;i=1,\ldots,m\\
\left\lfloor
\begin{array}{l}
v_{i,n+1}=v_{i,n}-y_{2,i,n}+q_{2,i,n}.
\end{array}
\right.\\[1mm]
\end{array}
\right.\\
\end{array}
\end{equation}
Next, let us introduce the Hilbert space
\begin{equation}
\label{e:2011-06-24}
\KKK=\HH\oplus\GG_1\oplus\cdots\oplus\GG_m,
\end{equation}
and the operators
\begin{equation}
\label{e:2011-06-18b}
\begin{array}{rll}
\boldsymbol{M}\colon\KKK&\to& 2^{\KKK}\\
(x,v_1,\ldots,v_m)&\mapsto&
(-z+Ax)\times(r_1+B_1^{-1}v_1)\times\cdots\times(r_m+B_m^{-1}v_m)
\end{array}
\end{equation}
and
\begin{equation}
\label{e:2011-06-18c}
\begin{array}{rll}
\boldsymbol{Q}\colon\KKK&\to&\KKK\\
(x,v_1,\ldots,v_m)&\mapsto&
\big(Cx+L_1^*v_1+\cdots+L_m^*v_m,-L_1x+D_1^{-1}v_1,\ldots,-L_mx
+D_m^{-1}v_m\big).
\end{array}
\end{equation}
Since the operator $A$ and $(B_i)_{1\leq i\leq m}$ are maximally 
monotone, so is $\boldsymbol{M}$ by 
\cite[Propositions~20.22 and~20.23]{Livre1}.
In addition, \cite[Propositions~23.15(ii) and~23.16]{Livre1} yield
\begin{multline}
\label{e:2011-06-25a}
(\forall\gamma\in\RPP)(\forall (x,v_1,\ldots,v_m)\in\KKK)\\
J_{\gamma\boldsymbol{M}}(x,v_1,\ldots,v_m)=
\big(J_{\gamma A}(x+\gamma z),
J_{\gamma B_1^{-1}}(v_1-\gamma r_1),\ldots,
J_{\gamma B_m^{-1}}(v_m-\gamma r_m)\big).
\end{multline}
Let us now examine the properties of ${\boldsymbol Q}$.
To this end, let $(x,v_1,\ldots,v_m)$ and $(y,w_1,\ldots,w_m)$ be 
two points in $\KKK$. Using the monotonicity of the operators $C$ 
and $(D_i^{-1})_{1\leq i\leq m}$, we derive from 
\eqref{e:2011-06-18c} that
\begin{multline}
\label{e:2011-06-19a}
\scal{(x,v_1,\ldots,v_m)-(y,w_1,\ldots,w_m)}
{{\boldsymbol Q}(x,v_1,\ldots,v_m)-
{\boldsymbol Q}(y,w_1,\ldots,w_m)}\\
\begin{aligned}[b]
&=\big\langle(x-y,v_1-w_1,\ldots,v_m-w_m)\,\big|\,
\big(Cx-Cy+L_1^*(v_1-w_1)+\cdots+L_m^*(v_m-w_m),\\
&\qquad\qquad\qquad\;-L_1(x-y)+D_1^{-1}v_1-D_1^{-1}w_1,
\ldots,-L_m(x-y)+D_m^{-1}v_m-D_m^{-1}w_m\big)\big\rangle\\
&=\scal{x-y}{Cx-Cy}+\sum_{i=1}^m
\scal{v_i-w_i}{D_i^{-1}v_i-D_i^{-1}w_i}\\
&\quad\;+\sum_{i=1}^m\big(\scal{x-y}{L_i^*(v_i-w_i)}-
\scal{v_i-w_i}{L_i(x-y)}\big)\\
&=\scal{x-y}{Cx-Cy}+
\sum_{i=1}^m\scal{v_i-w_i}{D_i^{-1}v_i-D_i^{-1}w_i}\\
&\geq 0.
\end{aligned}
\end{multline}
Hence, ${\boldsymbol Q}$ is monotone. Using the triangle inequality,
the Lipschitzianity assumptions, the Cauchy-Schwarz inequality, and 
\eqref{e:2011-06:25b}, we obtain
\begin{multline}
\label{e:2011-06-19b}
\big\|{\boldsymbol Q}(x,v_1,\ldots,v_m)-
{\boldsymbol Q}(y,w_1,\ldots,w_m)\big\|\\
\begin{aligned}[b]
&=\bigg\|\Big(Cx-Cy,D_1^{-1}v_1-D_1^{-1}w_1,\ldots,
D_m^{-1}v_m-D_m^{-1}w_m\Big)\\
&\qquad\;+\bigg(\sum_{i=1}^mL_i^*(v_i-w_i),-L_1(x-y),\ldots,
-L_m(x-y)\bigg)\bigg\|\\
&\leq\bigg\|\Big(Cx-Cy,D_1^{-1}v_1-D_1^{-1}w_1,\ldots,
D_m^{-1}v_m-D_m^{-1}w_m\Big)\bigg\|\\
&\quad\;+\bigg\|\bigg(\sum_{i=1}^mL_i^*(v_i-w_i),-L_1(x-y),\ldots,
-L_m(x-y)\bigg)\bigg\|\\
&=\sqrt{\|Cx-Cy\|^2+\sum_{i=1}^m\big
\|D_i^{-1}v_i-D_i^{-1}w_i\big\|^2}+
\sqrt{\bigg\|\sum_{i=1}^mL_i^*(v_i-w_i)\bigg\|^2+
\sum_{i=1}^m\|L_i(x-y)\|^2}\\
&\leq\sqrt{\mu^2\|x-y\|^2+\sum_{i=1}^m\nu_i^2\|v_i-w_i\|^2}
+\sqrt{\bigg(\sum_{i=1}^m\|L_i\|\,\|v_i-w_i\|\bigg)^2+
\sum_{i=1}^m\|L_i\|^2\,\|x-y\|^2}\\
&\leq\max\{\mu,\nu_1,\ldots,\nu_m\}
\sqrt{\|x-y\|^2+\sum_{i=1}^m\|v_i-w_i\|^2}\\
&\quad\;+\sqrt{\bigg(\sum_{i=1}^m\|L_i\|^2\bigg)
\bigg(\sum_{i=1}^m\|v_i-w_i\|^2\bigg)+
\bigg(\sum_{i=1}^m\|L_i\|^2\bigg)\|x-y\|^2}\\
&=\beta\|(x,v_1,\ldots,v_m)-(y,w_1,\ldots,w_m)\|.
\end{aligned}
\end{multline}
To sum up, we have shown that
\begin{equation}
\label{e:2011-04-19f}
\boldsymbol{M}\;\text{is maximally monotone and}\;
\boldsymbol{Q}\;\text{is monotone and $\beta$-Lipschitzian.}
\end{equation}
Next, let us observe that
\begin{align}
\eqref{e:2011-06-24a}
&\Leftrightarrow (\exi x\in\HH)\quad
z\in Ax+\sum_{i=1}^mL_i^*\big((B_i\infconv D_i)(L_ix-r_i)\big)
+Cx\nonumber\\
&\Leftrightarrow (\exi(x,v_1,\ldots,v_m)\in\KKK)\quad
\begin{cases}
z\in Ax+\sum_{i=1}^mL_i^*v_i+Cx\\
v_1\in(B_1\infconv D_1)(L_1x-r_1)\\
~~~~~\vdots\\
v_m\in(B_m\infconv D_m)(L_mx-r_m)
\end{cases}
\nonumber\\
&\Leftrightarrow (\exi(x,v_1,\ldots,v_m)\in\KKK)\quad
\begin{cases}
0\in -z+Ax+\sum_{i=1}^mL_i^*v_i+Cx\\
0\in r_1+B_1^{-1}v_1+D_1^{-1}v_1-L_1x\\
~~~\vdots\\
0\in r_m+B_m^{-1}v_m+D_m^{-1}v_m-L_mx\\
\end{cases}
\nonumber\\
&\Leftrightarrow (\exi(x,v_1,\ldots,v_m)\in\KKK)\quad
(0,\ldots,0)\in
(-z+Ax)\times(r_1+B_1^{-1}v_1)\times\cdots\times(r_m+B_m^{-1}v_m)
\nonumber\\
&\hskip 42mm +(L_1^*v_1+\cdots+L_m^*v_m+Cx,D_1^{-1}v_1-L_1x,\ldots,
D_m^{-1}v_m-L_mx)
\nonumber\\
&\Leftrightarrow (\exi(x,v_1,\ldots,v_m)\in\KKK)\quad
(0,\ldots,0)\in(\boldsymbol{M}+\boldsymbol{Q})(x,v_1,\ldots,v_m).
\end{align}
In other words, 
\begin{equation}
\label{e:2011-06-19g}
\zer(\boldsymbol{M}+\boldsymbol{Q})\neq\emp.
\end{equation}
Now, let us set
\begin{equation}
\label{e:nyc2010-10-31y'}
(\forall n\in\NN)\quad
\begin{cases}
\boldsymbol{x}_n=(x_n,v_{1,n},\ldots,v_{1,n})\\
\boldsymbol{y}_n=(y_{1,n},y_{2,1,n},\ldots,y_{2,m,n})\\
\boldsymbol{p}_n=(p_{1,n},p_{2,1,n},\ldots,p_{2,m,n})\\
\boldsymbol{q}_n=(q_{1,n},q_{2,1,n},\ldots,q_{2,m,n})
\end{cases}
\quad\text{and}\qquad
\begin{cases}
\boldsymbol{a}_n=(a_{1,n},a_{2,1,n},\ldots,a_{2,m,n})\\
\boldsymbol{b}_n=(b_{1,n},b_{2,1,n},\ldots,b_{2,m,n})\\
\boldsymbol{c}_n=(c_{1,n},c_{2,1,n},\ldots,c_{2,m,n}).
\end{cases}
\end{equation}
We first observe that our assumptions imply that
\begin{equation}
\label{e:broome-st2010-10-27g}
\sum_{n\in\NN}\|\boldsymbol{a}_n\|<\pinf,\quad
\sum_{n\in\NN}\|\boldsymbol{b}_n\|<\pinf,
\quad\text{and}\quad
\sum_{n\in\NN}\|\boldsymbol{c}_n\|<\pinf.
\end{equation}
Furthermore, it follows from \eqref{e:2011-06-18c}, 
\eqref{e:2011-06-25a}, and \eqref{e:nyc2010-10-31y'}, 
that \eqref{e:blackpagepart1} assumes in $\KKK$ the form of the
error-tolerant forward-backward-forward algorithm
\begin{equation}
\label{e:blackpagepart2}
(\forall n\in\NN)\quad 
\begin{array}{l}
\left\lfloor
\begin{array}{l}
\boldsymbol{y}_n=\boldsymbol{x}_n-
\gamma_n(\boldsymbol{Q}\boldsymbol{x}_n+\boldsymbol{a}_n)\\
\boldsymbol{p}_n=J_{\gamma_n\boldsymbol{M}}\,\boldsymbol{y}_n
+\boldsymbol{b}_n\\
\boldsymbol{q}_n=\boldsymbol{p}_n-\gamma_n(\boldsymbol{Q}
\boldsymbol{p}_n+\boldsymbol{c}_n)\\
\boldsymbol{x}_{n+1}=\boldsymbol{x}_n-
\boldsymbol{y}_n+\boldsymbol{q}_n.
\end{array}
\right.\\[2mm]
\end{array}
\end{equation}

\ref{t:1i}: It follows from \eqref{e:2011-04-19f}, 
\eqref{e:2011-06-19g}, \eqref{e:broome-st2010-10-27g}, 
\eqref{e:blackpagepart2}, and \cite[Theorem~2.5(i)]{Siop11} that 
$\sum_{n\in\NN}\|\boldsymbol{x}_n-\boldsymbol{p}_n\|^2<\pinf$.

\ref{t:1ii}: 
It follows from \cite[Theorem~2.5(ii)]{Siop11} that there exists 
$\overline{\boldsymbol x}\in\zer({\boldsymbol M}+{\boldsymbol Q})$ 
such that 
\begin{equation}
\label{e:2011-06-26w}
{\boldsymbol x}_n\weakly\overline{\boldsymbol x}
\quad\text{and}\quad
{\boldsymbol p}_n\weakly\overline{\boldsymbol x}. 
\end{equation}
Let us set
\begin{equation}
\label{e:2011-06-26v}
\overline{\boldsymbol x}=
(\overline{x},\overline{v_1},\ldots,\overline{v_m}). 
\end{equation}
In view of \eqref{e:2011-06-18b} and \eqref{e:2011-06-18c}, 
\begin{align}
\overline{\boldsymbol x}\in\zer(\boldsymbol{M}+\boldsymbol{Q})
&\Leftrightarrow 
\begin{cases}
0\in -z+A\overline{x}+\sum_{i=1}^mL_i^*\overline{v_i}+
C\overline{x}\\
0\in r_1+B_1^{-1}\overline{v_1}+D_1^{-1}\overline{v_1}-
L_1\overline{x}\\
~~~\vdots\\
0\in r_m+B_m^{-1}\overline{v_m}+D_m^{-1}\overline{v_m}-
L_m\overline{x}
\end{cases}
\nonumber\\
&\Leftrightarrow 
\begin{cases}
z-\sum_{j=1}^mL_j^*\overline{v_j}\in A\overline{x}+C\overline{x}\\
L_1\overline{x}-r_1\in(B_1^{-1}+D_1^{-1})\overline{v_1}\\
\hskip 17mm \vdots\\
L_m\overline{x}-r_m\in(B_m^{-1}+D_m^{-1})\overline{v_m}
\end{cases}
\label{e:2011-06-27a}\\
&\Leftrightarrow 
\begin{cases}
z-\sum_{j=1}^mL_j^*\overline{v_j}\in A\overline{x}+C\overline{x}\\
\overline{v_1}\in(B_1\infconv D_1)(L_1\overline{x}-r_1)\\
\hskip 6mm \vdots\\
\overline{v_m}\in(B_m\infconv D_m)(L_m\overline{x}-r_m)
\end{cases}
\label{e:2011-06-26}\\
&\Rightarrow 
\begin{cases}
z-\sum_{j=1}^mL_j^*\overline{v_j}\in A\overline{x}+C\overline{x}\\
L_1^*\overline{v_1}\in L_1^*\big((B_1\infconv D_1)
(L_1\overline{x}-r_1)\big)\\
\hskip 10mm \vdots\\
L_m^*\overline{v_m}\in L_m^*\big((B_m\infconv D_m)
(L_m\overline{x}-r_m)\big)
\end{cases}
\nonumber\\
&\Rightarrow 
z\in A\overline{x}+\sum_{i=1}^mL_i^*\big((B_i\infconv D_i)
(L_i\overline{x}-r_i)\big)+C\overline{x}
\nonumber\\
&\Leftrightarrow 
\overline{x}\;\text{solves \eqref{e:mprimal}}.
\label{e:2011-06-26x}
\end{align}
On the other hand, \eqref{e:2011-06-26} means that
\begin{equation}
\label{e:2011-06-26y}
(\overline{v_1},\ldots,\overline{v_m})\;\
\text{solves \eqref{e:mdual}}.
\end{equation}

\ref{t:1iia}: This follows from \eqref{e:2011-06-27a}.

\ref{t:1iib}: We derive from \eqref{e:2011-06-27a} that
\begin{equation}
\label{e:2011-06-27i}
\overline{x}\in(A+C)^{-1}\bigg(z-\sum_{j=1}^mL_j^*\overline{v_j}
\bigg)
\quad\text{and}\quad(\forall i\in\{1,\ldots,m\})\quad
L_i\overline{x}-r_i\in(B_i^{-1}+D_i^{-1})\overline{v_i}.
\end{equation}
Hence, 
\begin{equation}
\label{e:2011-06-27r}
(\forall i\in\{1,\ldots,m\})\quad
\begin{cases}
-L_i\overline{x}\in -L_i\big((A^{-1}\infconv C^{-1})
\big(z-\sum_{j=1}^mL_j^*\overline{v_j}\big)\big)\\
L_i\overline{x}-r_i\in(B_i^{-1}+D_i^{-1})\overline{v_i}.
\end{cases}
\end{equation}
Thus, 
\begin{equation}
(\forall i\in\{1,\ldots,m\})\quad
-r_i\in-L_i\bigg((A^{-1}\infconv C^{-1})
\bigg(z-\sum_{j=1}^mL_j^*\overline{v_j}\bigg)\bigg)+
B_i^{-1}\overline{v_i}+D_i^{-1}\overline{v_i}.
\end{equation}

\ref{t:1iic}: This follows from \eqref{e:2011-06-26w},
\eqref{e:2011-06-26v}, and \eqref{e:2011-06-26x}.

\ref{t:1iid}: This follows from \eqref{e:2011-06-26w},
\eqref{e:2011-06-26v}, and \eqref{e:2011-06-26y}.

\ref{t:1iie}: Let us set
\begin{equation}
\label{e:trtard}
(\forall n\in\NN)\quad 
\begin{cases}
\widetilde{y}_{1,n}=x_n-\gamma_n\big(Cx_n+
\sum_{j=1}^mL_j^*v_{j,n}\big)\\
\widetilde{p}_{1,n}=J_{\gamma_n A}(\widetilde{y}_{1,n}+\gamma_nz)
\end{cases}
\end{equation}
and
\begin{equation}
\label{e:ford}
(\forall i\in\{1,\ldots,m\})(\forall n\in\NN)\quad 
\begin{cases}
\widetilde{y}_{2,i,n}=v_{i,n}+\gamma_n(L_ix_n-D_i^{-1}v_{i,n})\\
\widetilde{p}_{2,i,n}=J_{\gamma_n B_i^{-1}}
(\widetilde{y}_{2,i,n}-\gamma_nr_i).
\end{cases}
\end{equation}
Then, in view of \eqref{e:blackpage}, 
\begin{equation}
(\forall n\in\NN)\quad
\|y_{1,n}-\widetilde{y}_{1,n}\|\leq\gamma_n\|a_{1,n}\|\leq
\beta^{-1}\|a_{1,n}\|
\end{equation}
and, using the nonexpansiveness of the resolvents 
\cite[Proposition~23.7]{Livre1}, we obtain
\begin{align}
(\forall n\in\NN)\quad
\|p_{1,n}-\widetilde{p}_{1,n}\| 
&\leq\|J_{\gamma_n A}(y_{1,n}+\gamma_nz)+b_{1,n}-
J_{\gamma_n A}(\widetilde{y}_{1,n}+\gamma_nz)\|\nonumber\\
&\leq\|y_{1,n}-\widetilde{y}_{1,n}\|+\|b_{1,n}\|\nonumber\\
&\leq\beta^{-1}\|a_{1,n}\|+\|b_{1,n}\|.
\end{align}
Since the sequences $(a_{1,n})_{n\in \NN}$ and 
$(b_{1,n})_{n\in \NN}$ are absolutely summable, it follows that
\begin{equation}
\label{e:convyp1t}
y_{1,n}-\widetilde{y}_{1,n}\to 0 
\quad\text{and}\quad
p_{1,n}-\widetilde{p}_{1,n}\to 0.
\end{equation}
Using the same arguments, we derive from \eqref{e:blackpage}
and \eqref{e:ford} that
\begin{equation}
\label{e:convyp2t}
(\forall i\in\{1,\ldots,m\})\quad
y_{2,i,n}-\widetilde{y}_{2,i,n}\to 0 \quad\text{and}\quad
p_{2,i,n}-\widetilde{p}_{2,i,n}\to 0.
\end{equation}
On the other hand, we deduce from \ref{t:1iia} that
there exists $u\in\HH$ such that
\begin{equation}
\label{e:uAxdusoir}
u\in A\overline{x}\quad\text{and}\quad
z=u+\sum_{j=1}^mL_j^*\overline{v_j}+C\overline{x},
\end{equation}
and that
\begin{equation}
\label{e:uBxdusoir}
(\forall i\in\{1,\ldots,m\})\quad L_i\overline{x}-r_i-
D_i^{-1}\overline{v_i}\in B_i^{-1}\overline{v_i}.
\end{equation}
In addition, \eqref{e:trtard} yields
\begin{equation}
\label{e:inclundusoir}
(\forall n\in\NN)\quad 
\gamma_n^{-1}(x_n-\widetilde{p}_{1,n})-C x_n
-\sum_{j=1}^m L_j^* v_{j,n}+z\in A \widetilde{p}_{1,n}
\end{equation}
while \eqref{e:ford} yields
\begin{equation}
\label{e:inclundusoir2}
(\forall i\in\{1,\ldots,m\})(\forall n\in\NN)\quad 
\gamma_n^{-1}(v_{i,n}-\widetilde{p}_{2,i,n})+L_i x_n-D_i^{-1}
v_{i,n}-r_i\in B_i^{-1}\widetilde{p}_{2,i,n}.
\end{equation}
Now let us set
\begin{equation}
\label{e:defalpha12}
(\forall n\in\NN)\quad 
\begin{cases}
\alpha_{1,n}=\|x_n-\widetilde{p}_{1,n}\|
\big(\varepsilon^{-1}\|\widetilde{p}_{1,n}-\overline{x}\|
+\mu\|x_n-\overline{x}\|+\sum_{i=1}^m \|L_i\|\,
\|v_{i,n}-\overline{v_{i}}\|\big)\\
\alpha_{2,n}=\sum_{i=1}^m(\varepsilon^{-1}+\nu_i) 
\|v_{i,n}-\widetilde{p}_{2,i,n}\|\,
\|\widetilde{p}_{2,i,n}-\overline{v_i}\|.
\end{cases}
\end{equation}
It follows from \ref{t:1i}, \ref{t:1iic}, \ref{t:1iid},
\eqref{e:convyp1t}, and \eqref{e:convyp2t} that
\begin{equation}
\label{e:2011-07-09a}
\alpha_{1,n}\to 0\quad\text{and}\quad\alpha_{2,n}\to 0.
\end{equation}
Using the Cauchy-Schwarz inequality, and the Lipschitzianity 
and monotonicity of $C$, we obtain
\begin{multline}
\label{e:majACstrong}
(\forall n\in\NN)\quad
\alpha_{1,n}+\Scal{x_n-\overline{x}}{\sum_{i=1}^m
L_i^*(\overline{v_i}-v_{i,n})}\\
\begin{aligned}[b]
&\geq\|x_n-\widetilde{p}_{1,n}\|
\big(\varepsilon^{-1}\|\widetilde{p}_{1,n}-\overline{x}\|+
\|C x_n-C\overline{x}\|\big)+\Scal{\widetilde{p}_{1,n}-x_n}
{\sum_{i=1}^mL_i^*(\overline{v_i}-v_{i,n})}\\
&\quad\;+\Scal{x_n-\overline{x}}{\sum_{i=1}^mL_i^*
(\overline{v_i}-v_{i,n})}\\
&=\|x_n-\widetilde{p}_{1,n}\|
\big(\varepsilon^{-1} \|\widetilde{p}_{1,n}-\overline{x}\|
+\|C x_n-C \overline{x}\|\big)+
\Scal{\widetilde{p}_{1,n}-\overline{x}}{\sum_{i=1}^m
L_i^*(\overline{v_i}-v_{i,n})}\\
&\geq\Scal{\widetilde{p}_{1,n}-\overline{x}}
{\gamma_n^{-1}(x_n-\widetilde{p}_{1,n})+\sum_{i=1}^m
L_i^*(\overline{v_i}-v_{i,n})}+\scal{\widetilde{p}_{1,n}-x_n}{C
\overline{x}-C x_n}\\
&=\Scal{\widetilde{p}_{1,n}-\overline{x}}
{\gamma_n^{-1}(x_n-\widetilde{p}_{1,n})-\sum_{i=1}^m L_i^* v_{i,n}
-Cx_n+\sum_{i=1}^mL_i^*\overline{v_i}+ C \overline{x}}+ 
\scal{\overline{x}-x_n}{C\overline{x}-C x_n}\\
&=\Scal{\widetilde{p}_{1,n}-\overline{x}}
{\gamma_n^{-1}(x_n-\widetilde{p}_{1,n})-\sum_{i=1}^m L_i^*v_{i,n}
-C x_n+z-u}+\scal{\overline{x}-x_n}{C\overline{x}-C x_n}\\
&\geq\Scal{\widetilde{p}_{1,n}-\overline{x}}
{\bigg(\gamma_n^{-1}(x_n-\widetilde{p}_{1,n})-
\sum_{i=1}^mL_i^*v_{i,n}-C x_n+z\bigg)-u}.
\end{aligned}
\end{multline}
Now suppose that $A$ is uniformly monotone at $\overline{x}$.
Then, in view of \eqref{e:uAxdusoir}, \eqref{e:inclundusoir}, 
and \eqref{e:majACstrong}, there exists an increasing function 
$\phi_A\colon\RP\to\RPX$ that vanishes only at $0$ such that
\begin{equation}
\label{e:ineq1strong}
(\forall n\in\NN)\quad
\alpha_{1,n}+\Scal{x_n-\overline{x}}{\sum_{i=1}^m
L_i^*(\overline{v_i}-v_{i,n})}\geq\phi_A
(\|\widetilde{p}_{1,n}-\overline{x}\|).
\end{equation}
On the other hand, it follows from \eqref{e:defalpha12}, the 
Lipschitzianity of the operators 
$(D_i^{-1})_{1\leq i\leq m}$, \eqref{e:uBxdusoir}, 
\eqref{e:inclundusoir2}, and the monotonicity of the operators 
$(B_i^{-1})_{1\leq i\leq m}$ and $(D_i^{-1})_{1\leq i\leq m}$ that
\begin{multline}
\label{e:ineq2strong}
(\forall n\in\NN)\quad
\alpha_{2,n}+\Scal{x_n-\overline{x}}{\sum_{i=1}^m
L_i^*(\widetilde{p}_{2,i,n}-\overline{v_{i}})}\\
\begin{aligned}[b]
&\geq\sum_{i=1}^m\scal{\gamma_n^{-1}(v_{i,n}-\widetilde{p}_{2,i,n})
-D_i^{-1} v_{i,n}+D_i^{-1} \widetilde{p}_{2,i,n}
+ L_i(x_n-\overline{x})}{\widetilde{p}_{2,i,n}-\overline{v_i}}\\
&=\sum_{i=1}^m\Big(\scal{\gamma_n^{-1}(v_{i,n}-
\widetilde{p}_{2,i,n})+L_i x_n-D_i^{-1} v_{i,n}-r_i-(L_i\overline{x}
-r_i-D_i^{-1}\overline{v_i})}{\widetilde{p}_{2,i,n}-\overline{v_i}}\\
&\qquad\quad+\scal{D_i^{-1}\widetilde{p}_{2,i,n}-D_i^{-1}
\overline{v_i}}{\widetilde{p}_{2,i,n}-\overline{v_i}}\Big)\\
&\geq 0.
\end{aligned}
\end{multline}
Adding \eqref{e:ineq1strong} and \eqref{e:ineq2strong} yields
\begin{equation}
\label{e:conclustrong}
(\forall n\in\NN)\quad
\alpha_{1,n}+\alpha_{2,n}+\Scal{x_n-\overline{x}}{\sum_{i=1}^m
L_i^*(\widetilde{p}_{2,i,n}-v_{i,n})}
\geq\phi_A(\|\widetilde{p}_{1,n}-\overline{x}\|).
\end{equation}
It then follows from \eqref{e:2011-07-09a}, \ref{t:1iic},
\ref{t:1i}, \eqref{e:convyp2t}, and \cite[Lemma~2.41(iii)]{Livre1} 
that $\phi_A(\|\widetilde{p}_{1,n}-\overline{x}\|)\to 0$ and,
in turn, that $\widetilde{p}_{1,n}\to\overline{x}$.
Hence, in view of \ref{t:1i} and \eqref{e:convyp1t}, we get
$x_n\to\overline{x}$ and $p_{1,n}\to\overline{x}$.
Likewise, if $C$ is uniformly monotone at $\overline{x}$,
there exists an increasing function $\phi_C\colon\RP\to\RPX$ that 
vanishes only at $0$ such that 
\begin{align}
(\forall n\in\NN)\quad
\alpha_{1,n}+\alpha_{2,n}+\Scal{x_n-\overline{x}}{\sum_{i=1}^m
L_i^*(\widetilde{p}_{2,i,n}-v_{i,n})}\geq
\phi_C(\|x_n-\overline{x}\|),
\end{align}
and we reach the same conclusion.

\ref{t:1iif}: Suppose that $B_i^{-1}$ is uniformly monotone
at $\overline{v_i}$ for some $i\in \{1,\ldots,m\}$. Then, 
proceeding as in \eqref{e:ineq2strong}, there exists an increasing 
function $\phi_{B_i}\colon\RP\to\RPX$ that vanishes only at $0$ 
such that
\begin{multline}
\label{e:ineq2strongdual}
(\forall n\in\NN)\quad
\alpha_{2,n}+\Scal{x_n-\overline{x}}{\sum_{j=1}^m
L_j^*(\widetilde{p}_{2,j,n}-\overline{v_{j}})}\\
\begin{aligned}[b]
&\geq\sum_{j=1}^m\bigg(\scal{\gamma_n^{-1}(v_{j,n}
-\widetilde{p}_{2,j,n})+L_j x_n-D_j^{-1} v_{j,n}-r_j-
(L_j\overline{x}-r_j-D_j^{-1}\overline{v_j})}
{\widetilde{p}_{2,j,n}-\overline{v_j}}\\
&\qquad\qquad+\scal{D_j^{-1}\widetilde{p}_{2,j,n}-
D_j^{-1}\overline{v_j}}
{\widetilde{p}_{2,j,n}-\overline{v_j}}\bigg)\\
&\geq\sum_{j=1}^m\scal{\gamma_n^{-1}(v_{j,n}
-\widetilde{p}_{2,j,n})+L_j x_n-D_j^{-1} v_{j,n}-r_j-
(L_j\overline{x}-r_j-D_j^{-1}\overline{v_j})}
{\widetilde{p}_{2,j,n}-\overline{v_j}}\\
&\geq\scal{\gamma_n^{-1}(v_{i,n}
-\widetilde{p}_{2,i,n})+L_i x_n-D_i^{-1} v_{i,n}-r_i-
(L_i\overline{x}-r_i-D_i^{-1}\overline{v_i})}
{\widetilde{p}_{2,i,n}-\overline{v_i}}\\
&\geq\phi_{B_i}(\|\widetilde{p}_{2,i,n}-\overline{v_i}\|).
\end{aligned}
\end{multline}
On the other hand, according to \eqref{e:majACstrong},
\begin{equation}
\label{e:majACstrongdual}
(\forall n\in\NN)\quad
\alpha_{1,n}+\Scal{x_n-\overline{x}}{\sum_{j=1}^m
L_j^*(\overline{v_j}-v_{j,n})}\geq 0.
\end{equation}
Hence,
\begin{equation}
\label{e:conclustrongdual }
(\forall n\in\NN)\quad
\alpha_{1,n}+\alpha_{2,n}+\Scal{x_n-\overline{x}}{\sum_{j=1}^m
L_j^*(\widetilde{p}_{2,j,n}-v_{j,n})}
\geq\phi_{B_i}(\|\widetilde{p}_{2,i,n}-\overline{v_i}\|).
\end{equation}
By proceeding as previously, we infer that
$\widetilde{p}_{2,i,n}\to\overline{v_i}$ and hence, via
\eqref{e:convyp2t} and \ref{t:1i}, that $p_{2,i,n}\to\overline{v_i}$ 
and $v_{i,n}\to\overline{v_i}$. If $D_i^{-1}$ is uniformly
monotone at $\overline{v_i}$, the same arguments lead
to these conclusions.
\end{proof}

In the following remarks, we comment on the structure of
the proposed algorithm and its relation to existing work.

\begin{remark}
\label{r:2011-06-29}
Here are some observations regarding the structure of algorithm
\eqref{e:blackpage}. 
\begin{enumerate}
\item
The algorithm achieves full splitting in that each of the operators 
appearing in Problem~\ref{prob:1} is used separately.
\item
The algorithm uses explicit steps for the single-valued operators 
and implicit steps for the set-valued operators. Since explicit steps
are typically much easier to implement than implicit steps, the 
algorithm therefore exploits efficiently the properties of the 
operators.
\item
The sequences $(a_{1,n})_{n\in\NN}$, $(b_{1,n})_{n\in\NN}$, and
$(c_{1,n})_{n\in\NN}$, and, for every $i\in \{1,\ldots,m\}$,
$(a_{2,i,n})_{n\in\NN}$, $(b_{2,i,n})_{n\in\NN}$, and
$(c_{2,i,n})_{n\in\NN}$ relax the requirement for exact evaluations 
of the operators over the course of the iterations. 
\item
Most of the elementary steps in \eqref{e:blackpage} can be executed 
in parallel.
\item
The update of the variable $p_{2,i,n}$ can also be carried out
using the resolvent of $B_i$ since \cite[Proposition~23.18]{Livre1}
$J_{\gamma_nB_i^{-1}}(y_{2,i,n}-\gamma_nr_i)=y_{2,i,n}-
\gamma_nr_i-\gamma_n J_{\gamma_n^{-1}B_i}
(\gamma_n^{-1}(y_{2,i,n}-\gamma_nr_i))$.
\end{enumerate}
\end{remark}

\begin{remark}
\label{r:2011-06-19}
Some noteworthy connections between Theorem~\ref{t:1} and existing 
work are the following.
\begin{enumerate}
\item
Unlike most splitting methods, the proposed algorithm is designed
to solve explicitly a dual problem.
\item
In the special case when $m=1$ and $D_1$ is as in 
\eqref{e:2011-06-22}, the primal problem \eqref{e:mprimal}
reduces to (we drop the subscript `1' for brevity)
\begin{equation}
\label{e:2011-06-29p}
\text{find}\quad\overline{x}\in\HH\;\;\text{such that}\quad
z\in A\overline{x}+L^*\big(B(L\overline{x}-r)\big)+C\overline{x},
\end{equation}
the dual problem \eqref{e:mdual} reduces to
\begin{equation}
\label{e:2011-06-29q}
\text{find}\quad\overline{v}\in\GG\;\;\text{such that}\quad
-r\in-L\big((A+C)^{-1}(z-L^*\overline{v})\big)+
B^{-1}\overline{v},
\end{equation}
and the algorithm is governed by the iteration
\begin{equation}
\label{e:blackpagepart5}
\begin{array}{l}
\left\lfloor
\begin{array}{l}
y_{1,n}=x_n-\gamma_n\big(Cx_n+L^*v_{n}
+a_{1,n}\big)\\
y_{2,n}=v_{n}+\gamma_n(Lx_n+a_{2,n})\\
p_{1,n}=J_{\gamma_n A}(y_{1,n}+\gamma_nz)+b_{1,n}\\
p_{2,n}=J_{\gamma_n B^{-1}}(y_{2,n}-\gamma_nr)+b_{2,n}\\
q_{1,n}=p_{1,n}-\gamma_n\big(Cp_{1,n}+L^*p_{2,n}+c_{1,n}\big)\\
q_{2,n}=p_{2,n}+\gamma_n(Lp_{1,n}+c_{2,n})\\
x_{n+1}=x_n-y_{1,n}+q_{1,n}\\
v_{n+1}=v_n-y_{2,n}+q_{2,n}.
\end{array}
\right.\\
\end{array}
\end{equation}
On the one hand, if $C\colon x\mapsto 0$, we recover the primal-dual
setting of \cite{Siop11} and its algorithm 
(\cite[Eq.~(3.1)]{Siop11}). On the other hand, if 
$L\colon x\mapsto 0$, $B\colon y\mapsto\{0\}$, $z=0$, and $r=0$,
\eqref{e:2011-06-29p} yields the problem studied in 
\cite{Tsen00}, and \eqref{e:blackpagepart5} without error terms
and dual variables yields a primal algorithm proposed in that paper,
namely 
\begin{equation}
\label{e:tseng2000}
\begin{array}{l}
\left\lfloor
\begin{array}{l}
y_{1,n}=x_n-\gamma_n Cx_n\\
p_{1,n}=J_{\gamma_n A}y_{1,n}\\
q_{1,n}=p_{1,n}-\gamma_n Cp_{1,n}\\
x_{n+1}=x_n-y_{1,n}+q_{1,n}.
\end{array}
\right.\\
\end{array}
\end{equation}
Let us note that, even when we specialize \eqref{e:2011-06-29p} 
to $\GG=\HH$ and $L=\Id$, there does not appear to exist an 
alternative algorithm that splits $A$, $B$, and $C$ and uses 
explicit steps on the Lipschitzian operator $C$.
\item
When $C\colon x\mapsto 0$ and, for every $i\in\{1,\ldots,m\}$, 
$D_i^{-1}\colon y\mapsto\{0\}$, we recover the primal-dual setting
of \cite[Theorem~3.8]{Siop11}. However, the algorithm we obtain is
different from that proposed in that paper, and novel. 
\item
In general, the weak convergence results of
Theorem~\ref{t:1}\ref{t:1ii} cannot be improved to strong 
convergence without additional hypotheses on the operators
such as those described in \ref{t:1iie} and \ref{t:1iif}. 
Indeed, in the special case when \eqref{e:mprimal} reduces to 
the problem of finding a zero of $A$, the primal component of 
\eqref{e:blackpage} reduces to 
the proximal point algorithm, namely 
(set $C\colon x\mapsto 0$ in \eqref{e:tseng2000})
\begin{equation}
\label{e:2011-06-30b}
(\forall n\in\NN)\quad x_{n+1}=J_{\gamma_n A}x_n,
\end{equation}
which is known to converge weakly but not strongly 
\cite{Baus05,Gule91}.
\end{enumerate}
\end{remark}

\section{Minimization problems}
\label{sec:4}

The proposed monotone operator splitting algorithm can be applied 
to a broader class of problems than that within the reach of 
existing splitting methods. It has therefore potential 
applications in the areas in which these methods have been used, 
e.g., partial differential equations \cite{Gaba83,Merc79}, 
mechanics \cite{Glow89,Merc80}, variational inequalities 
\cite{Livre1,Pang03,Tsen91}, game theory \cite{Luis11}, traffic 
theory \cite{Fuku96}, and evolution equations \cite{Sico10}. In 
this section, we focus on the application of the results of 
Section~\ref{sec:3} to convex minimization problems. 

\begin{problem}
\label{prob:2}
Let $\HH$ be a real Hilbert space, let $z\in\HH$, let $m$ be a 
strictly positive integer, let $f\in\Gamma_0(\HH)$, and let
$h\colon\HH\to\RR$ be convex and differentiable with a 
$\mu$-Lipschitzian gradient for some $\mu\in\RPP$. 
For every $i\in\{1,\ldots,m\}$, let $\GG_i$ be a 
real Hilbert space, let $r_i\in\GG_i$, let 
$g_i\in\Gamma_0(\GG_i)$, let $\ell_i\in\Gamma_0(\GG_i)$ 
be $1/\nu_i$-strongly convex, for 
some $\nu_i\in\RPP$, and suppose that $L_i\colon\HH\to\GG_i$ is 
a nonzero bounded linear operator. Consider the problem
\begin{equation}
\label{e:primalvar}
\minimize{x\in\HH}{f(x)+\sum_{i=1}^m\,(g_i\infconv\ell_i)
(L_ix-r_i)+h(x)-\scal{x}{z}}, 
\end{equation}
and the dual problem
\begin{equation}
\label{e:dualvar}
\minimize{v_1\in\GG_1,\ldots,v_m\in\GG_m}{
\big(f^*\infconv h^*\big)\bigg(z-\sum_{i=1}^mL_i^*v_i\bigg)
+\sum_{i=1}^m\big(g_i^*(v_i)+\ell_i^*(v_i)+\scal{v_i}{r_i}\big)}.
\end{equation}
\end{problem}

The following result is an offspring of Theorem~\ref{t:1}.
\begin{theorem}
\label{t:2}
In Problem~\ref{prob:2}, suppose that 
\begin{equation}
\label{e:2011-06-27c}
z\in\ran\bigg(\partial f+\sum_{i=1}^mL_i^*(\partial g_i\infconv
\partial \ell_i)(L_i\cdot-r_i)+\nabla h\bigg).
\end{equation}
Let $(a_{1,n})_{n\in\NN}$, $(b_{1,n})_{n\in\NN}$, and
$(c_{1,n})_{n\in\NN}$ be absolutely summable sequences in $\HH$
and, for every $i\in\{1,\ldots,m\}$, let 
$(a_{2,i,n})_{n\in\NN}$, $(b_{2,i,n})_{n\in\NN}$, and
$(c_{2,i,n})_{n\in\NN}$ be absolutely summable sequences in 
$\GG_i$. Furthermore, set 
\begin{equation}
\label{e:2011-06:27d}
\beta=\max\{\mu,\nu_1,\ldots,\nu_m\}+\sqrt{\sum_{i=1}^m\|L_i\|^2},
\end{equation}
let $x_0\in\HH$, let
$(v_{1,0},\ldots,v_{m,0})\in\GG_1\oplus\cdots\oplus\GG_m$, let 
$\varepsilon\in\left]0,1/(\beta+1)\right[$, 
let $(\gamma_n)_{n\in\NN}$ be a sequence in 
$[\varepsilon,(1-\varepsilon)/\beta]$, and set
\begin{equation}
\label{e:blackpagepart3}
(\forall n\in\NN)\quad 
\begin{array}{l}
\left\lfloor
\begin{array}{l}
y_{1,n}=x_n-\gamma_n\big(\nabla h(x_n)+\sum_{i=1}^mL_i^*v_{i,n}
+a_{1,n}\big)\\
p_{1,n}=\prox_{\gamma_n f}(y_{1,n}+\gamma_nz)+b_{1,n}\\
\operatorname{For}\;i=1,\ldots,m\\
\left\lfloor
\begin{array}{l}
y_{2,i,n}=v_{i,n}+\gamma_n(L_ix_n-\nabla\ell_i^{*}(v_{i,n})
+a_{2,i,n})\\
p_{2,i,n}=\prox_{\gamma_n g_i^{*}}(y_{2,i,n}-\gamma_nr_i)
+b_{2,i,n}\\
q_{2,i,n}=p_{2,i,n}+\gamma_n\big(L_ip_{1,n}-\nabla\ell_i^{*}
(p_{2,i,n})+c_{2,i,n}\big)\\
v_{i,n+1}=v_{i,n}-y_{2,i,n}+q_{2,i,n}.
\end{array}
\right.\\[1mm]
q_{1,n}=p_{1,n}-\gamma_n\big(\nabla h(p_{1,n})+
\sum_{i=1}^mL_i^*p_{2,i,n} +c_{1,n}\big)\\
x_{n+1}=x_n-y_{1,n}+q_{1,n}.
\end{array}
\right.\\
\end{array}
\end{equation}
Then the following hold.
\begin{enumerate}
\item
\label{t:2i}
$\sum_{n\in\NN}\|x_n-p_{1,n}\|^2<\pinf$ and 
$(\forall i\in\{1,\ldots,m\})$
$\sum_{n\in\NN}\|v_{i,n}-p_{2,i,n}\|^2<\pinf$.
\item
\label{t:2ii}
There exist a solution $\overline{x}$ to \eqref{e:primalvar} and a 
solution $(\overline{v_1},\ldots,\overline{v_m})$ to 
\eqref{e:dualvar} such that the following hold.
\begin{enumerate}
\item
$z-\sum_{j=1}^mL_j^*\overline{v_j}\in\partial f(\overline{x})+
\nabla h(\overline{x})$ 
and $(\forall i\in\{1,\ldots,m\})$
$L_i\overline{x}-r_i\in\partial g_i^{*}(\overline{v_i})
+\nabla\ell_i^{*}(\overline{v_i})$.
\item
$x_n\weakly\overline{x}$ and $p_{1,n}\weakly\overline{x}$.
\item
$(\forall i\in\{1,\ldots,m\})$ $v_{i,n}\weakly\overline{v_i}$ and 
$p_{2,i,n}\weakly\overline{v_i}$.
\item 
\label{t:2iie}
Suppose that $f$ or $h$ is uniformly convex at $\overline{x}$.
Then $x_n\to\overline{x}$ and $p_{1,n}\to\overline{x}$.
\item 
\label{t:2iif}
Suppose that, for some $i\in\{1,\ldots,m\}$, $g_i^{*}$ or 
$\ell_i^{*}$ is uniformly convex at $\overline{v_i}$. Then
$v_{i,n}\to \overline{v_i}$ and $p_{2,i,n} \to \overline{v_i}$.
\end{enumerate}
\end{enumerate}
\end{theorem}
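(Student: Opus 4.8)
The plan is to derive Theorem~\ref{t:2} from Theorem~\ref{t:1} by specializing the latter to subdifferential operators. First I would set $A=\partial f$ and, for each $i\in\{1,\ldots,m\}$, $B_i=\partial g_i$; these are maximally monotone by the standard theory of convex subdifferentials. Next I would set $C=\nabla h$, which is monotone and $\mu$-Lipschitzian (indeed cocoercive by Baillon--Haddad, but monotonicity and Lipschitz continuity are all that Theorem~\ref{t:1} requires), and $D_i=\partial \ell_i$; since $\ell_i$ is $1/\nu_i$-strongly convex, $\ell_i^*$ is differentiable with $\nu_i$-Lipschitzian gradient, hence $D_i^{-1}=(\partial\ell_i)^{-1}=\partial\ell_i^*=\nabla\ell_i^*$ is single-valued, monotone, and $\nu_i$-Lipschitzian, as needed. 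With these identifications the parallel sum becomes $B_i\infconv D_i=\partial g_i\infconv\partial\ell_i$, which under a suitable qualification coincides with $\partial(g_i\infconv\ell_i)$, so the range condition \eqref{e:2011-06-27c} is exactly \eqref{e:2011-06-24a}.

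Second, I would check that the iteration \eqref{e:blackpagepart3} is the instance of \eqref{e:blackpage} produced by these substitutions. This is immediate once one invokes \eqref{e:prox2}: $J_{\gamma_n A}=J_{\gamma_n\partial f}=\prox_{\gamma_n f}$ and $J_{\gamma_n B_i^{-1}}=J_{\gamma_n\partial g_i^*}=\prox_{\gamma_n g_i^*}$, while $C=\nabla h$ and $D_i^{-1}=\nabla\ell_i^*$ replace the operator evaluations in the explicit steps. So parts \ref{t:2i}, the first bullet of \ref{t:2ii}, and the weak convergence bullets follow verbatim from Theorem~\ref{t:1}\ref{t:1i}, \ref{t:1iia}, \ref{t:1iic}, \ref{t:1iid}, once we know that $\overline{x}$ solving \eqref{e:mprimal} and $(\overline{v_i})$ solving \eqref{e:mdual} are the same as solving \eqref{e:primalvar} and \eqref{e:dualvar} respectively. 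That equivalence is the content of Fenchel--Rockafellar-type duality: under \eqref{e:2011-06-27c} the primal inclusion $z\in\partial f(\overline x)+\sum_i L_i^*\partial(g_i\infconv\ell_i)(L_i\overline x-r_i)+\nabla h(\overline x)$ is precisely the Fermat condition for \eqref{e:primalvar}, and similarly on the dual side using $(\partial\varphi)^{-1}=\partial\varphi^*$ and the conjugacy rules $(g_i\infconv\ell_i)^*=g_i^*+\ell_i^*$ and $(f+h)^*=f^*\infconv h^*$.

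Third, for the strong convergence statements \ref{t:2iie} and \ref{t:2iif} I would translate the uniform monotonicity hypotheses of Theorem~\ref{t:1}\ref{t:1iie}--\ref{t:1iif} into uniform convexity. The key fact is that a convex function $\varphi$ is uniformly convex at a point $x$ (in the sense of \eqref{e:Unifconvex}) if and only if $\partial\varphi$ is uniformly monotone at $x$ (in the sense of \eqref{e:Unifmon}), with the modulus of uniform monotonicity controlled by the modulus of uniform convexity; one direction is an elementary computation from the subgradient inequality at the two points, adding the two inequalities and using convexity along the segment. Applying this with $\varphi=f$ gives that $f$ uniformly convex at $\overline x$ implies $A=\partial f$ uniformly monotone at $\overline x$; with $\varphi=h$ it gives the statement for $C=\nabla h$; with $\varphi=g_i^*$ and $\varphi=\ell_i^*$ it gives the statements for $B_i^{-1}=\partial g_i^*$ and $D_i^{-1}=\partial\ell_i^*$. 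Then Theorem~\ref{t:1}\ref{t:1iie}--\ref{t:1iif} yield the claimed strong convergence.

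The main obstacle is the qualification-condition bookkeeping needed to pass between $\partial g_i\infconv\partial\ell_i$ and $\partial(g_i\infconv\ell_i)$, and between the monotone-inclusion solutions and the minimizers of \eqref{e:primalvar}--\eqref{e:dualvar}. In fact, since $\ell_i$ is strongly convex it is coercive and the infimal convolution $g_i\infconv\ell_i$ is exact and belongs to $\Gamma_0(\GG_i)$ with $\partial(g_i\infconv\ell_i)=\partial g_i\infconv\partial\ell_i$ holding without any extra hypothesis, so this point is in fact mild; the genuinely delicate step is verifying that the range condition \eqref{e:2011-06-27c} is exactly what is needed for the Fermat characterization and for \eqref{e:2011-06-24a}, which one handles by the standard subdifferential sum rule together with $\sri$-type qualification that is already built into \eqref{e:2011-06-27c} being phrased as a range condition rather than requiring interior assumptions.
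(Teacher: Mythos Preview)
Your proposal is correct and follows essentially the same approach as the paper: specialize Theorem~\ref{t:1} via $A=\partial f$, $B_i=\partial g_i$, $C=\nabla h$, $D_i=\partial\ell_i$, identify the iterations through \eqref{e:prox2} and $D_i^{-1}=\nabla\ell_i^*$, pass from inclusion solutions to minimizers via Fermat's rule and the conjugacy identities $(g_i\infconv\ell_i)^*=g_i^*+\ell_i^*$, $(f+h)^*=f^*\infconv h^*$, and obtain \ref{t:2iie}--\ref{t:2iif} from the fact that uniform convexity of $\varphi$ at a point implies uniform monotonicity of $\partial\varphi$ there. One simplification over your last paragraph: the ``genuinely delicate step'' is milder than you suggest, because only the easy inclusion $\partial\varphi_1+\sum_iL_i^*\partial\varphi_2^{(i)}(L_i\cdot-r_i)\subset\partial\big(\varphi_1+\sum_i\varphi_2^{(i)}(L_i\cdot-r_i)\big)$ is needed to deduce that $\overline{x}$ (resp.\ $(\overline{v_i})$) minimizes \eqref{e:primalvar} (resp.\ \eqref{e:dualvar}), so no constraint qualification beyond \eqref{e:2011-06-27c} itself enters.
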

\begin{proof}
Let us first establish a connection between Problem~\ref{prob:2} 
and Problem~\ref{prob:1}. To this end, let us define
\begin{equation}
\label{e:2011-06-27m}
A=\partial f,\;\:C=\nabla h,\;\:\text{and}\quad
(\forall i\in\{1,\ldots,m\})\quad
B_i=\partial g_i\;\:\text{and}\;\:
D_i=\partial\ell_i.
\end{equation}
It is clear that \eqref{e:2011-06-27c} yields 
\eqref{e:2011-06-24a} and, using \eqref{e:heidelberg2011-07-03}
and \eqref{e:prox2}, that
\eqref{e:blackpagepart3} yields \eqref{e:blackpage}. Moreover,
it follows from \cite[Theorem~20.40]{Livre1} that the operators 
$A$ and $(B_i)_{1\leq i\leq m}$ are maximally monotone, and from
\cite[Proposition~17.10]{Livre1} that $C$ is monotone. On the 
other hand, for every $i\in\{1,\ldots,m\}$, it follows from 
the $1/\nu_i$-strong convexity of $\ell_i$ and 
\cite[Corollary~13.33 and Theorem~18.15]{Livre1} that 
$\ell_i^*$ is Fr\'echet differentiable on $\GG_i$ with a 
$\nu_i$-Lipschitzian gradient, and from 
\eqref{e:heidelberg2011-07-03} that $D_i^{-1}=\nabla\ell_i^*$.
Altogether, we can apply Theorem~\ref{t:1} to obtain the 
existence of a point $\overline{x}\in\HH$ such that
\begin{equation}
\label{e:2011-06-28b}
z\in\partial f(\overline{x})+
\sum_{i=1}^mL_i^*\big((\partial g_i\infconv\partial\ell_i)
(L_i\overline{x}-r_i)\big)+\nabla h(\overline{x}),
\end{equation}
and of an $m$-tuple $(\overline{v_1},\ldots,\overline{v_m})\in
\GG_1\oplus\cdots\oplus\GG_m$ such that
\begin{equation}
\label{e:mdual-var1}
(\exi x\in\HH)\quad
\begin{cases}
z-\sum_{j=1}^mL_j^*\overline{v_j}\in\partial f(x)+\nabla h(x)\\
(\forall i \in \{1,\ldots,m\})\quad
\overline{v_i}\in(\partial g_i\infconv \partial\ell_i)(L_ix-r_i),
\end{cases}
\end{equation}
that satisfy \ref{t:2i} and \ref{t:2ii}. It remains to show that
$\overline{x}$ solve \eqref{e:primalvar} and
$(\overline{v_1},\ldots,\overline{v_m})$ solves \eqref{e:dualvar}.
We first observe that since, for every $i\in\{1,\ldots,m\}$, 
$\dom\ell_i^*=\GG_i$ \cite[Proposition~24.27]{Livre1} yields 
\begin{equation}
\label{e:2011-06-28a}
(\forall i\in\{1,\ldots,m\})\quad
\partial g_i\infconv\partial\ell_i=\partial(g_i\infconv\ell_i).
\end{equation}
On the other hand, it follows from 
\cite[Corollary~16.38(iii) and Proposition~17.26(i)]{Livre1} that
\begin{equation}
\label{e:2011-06-28c}
\partial\big(f+h-\scal{\cdot}{z}\big)=\partial f+\nabla h-z.
\end{equation}
As a result, we derive from \eqref{e:2011-06-28b} that
\begin{equation}
\label{e:2011-06-29a}
0\in\partial\big(f+h-\scal{\cdot}{z}\big)(\overline{x})+
\sum_{i=1}^mL_i^*\big(\partial(g_i\infconv\ell_i)
(L_i\overline{x}-r_i)\big).
\end{equation}
However, since \eqref{e:2011-06-27c} and 
\cite[Proposition~16.5(ii)]{Livre1} imply that
\begin{equation}
\label{e:2011-06-27k}
\partial\big(f+h-\scal{\cdot}{z}\big)+
\sum_{i=1}^mL_i^*\big(\partial(g_i\infconv\ell_i)\big)(L_i\cdot-r_i)
\subset\partial\bigg(f+h-\scal{\cdot}{z}+
\sum_{i=1}^m(g_i\infconv\ell_i)\circ(L_i\cdot-r_i)\bigg),
\end{equation}
it follows from \eqref{e:2011-06-29a} that
\begin{equation}
\label{e:2011-06-29s}
0\in\partial\bigg(f+h-\scal{\cdot}{z}+\sum_{i=1}^m
(g_i\infconv\ell_i)\circ(L_i\cdot-r_i)\bigg)(\overline{x}).
\end{equation}
Thus, Fermat's rule \cite[Theorem~16.2]{Livre1} asserts that 
$\overline{x}$ solves \eqref{e:primalvar}. Finally, to show that
$(\overline{v_1},\ldots,\overline{v_m})$ solves \eqref{e:dualvar},
we first note that it follows from \eqref{e:2011-06-28c},
\eqref{e:heidelberg2011-07-03}, and \cite[Proposition~15.2]{Livre1}
that
\begin{equation}
\label{e:2011-06-29e}
\big(\partial f+\nabla h\big)^{-1}=\big(\partial(f+h)\big)^{-1}=
\partial(f+h)^{*}=\partial\big(f^*\infconv h^*\big).
\end{equation}
Likewise, \eqref{e:2011-06-28a} and 
\cite[Proposition~13.21(i)]{Livre1} yield
\begin{equation}
\label{e:2011-06-29f}
(\forall i\in\{1,\ldots,m\})\quad
\big(\partial g_i\infconv\partial\ell_i\big)^{-1}=
\partial\big(g_i\infconv\ell_i\big)^{*}=
\partial\big(g_i^*+\ell_i^*\big).
\end{equation}
Hence, combining \eqref{e:mdual-var1}, \eqref{e:2011-06-29e}, and 
\eqref{e:2011-06-29f}, we obtain 
\begin{equation}
\label{e:2011-06-29d}
(\exi x\in\HH)\quad
\begin{cases}
x\in\partial(f^*\infconv h^*)
\big(z-\sum_{j=1}^mL_j^*\overline{v_j}\big)\\
(\forall i\in\{1,\ldots,m\})\quad
L_ix-r_i\in\partial\big(g_i^*+\ell_i^*\big)(\overline{v_i})
\end{cases}
\end{equation}
and therefore
\begin{equation}
\label{e:2011-06-29g}
(\exi x\in\HH)\quad
\begin{cases}
-(L_ix)_{1\leq i\leq m}\in
-\bigg(\underset{i=1}{\overset{m}{\cart}}L_i\bigg)
\Big(\partial(f^*\infconv h^*)
\big(z-\sum_{j=1}^mL_j^*\overline{v_j}\big)\Big)\\
(L_ix)_{1\leq i\leq m}\in\underset{i=1}{\overset{m}{\cart}}
\partial\big(g_i^*+\ell_i^*+\scal{\cdot}{r_i}\big)(\overline{v_i}).
\end{cases}
\end{equation}
Hence, using \cite[Propositions~16.5(ii) and 16.8]{Livre1}
and the notation \eqref{e:2011-06-30a},
\begin{align}
\label{e:2011-06-29h}
(0,\ldots,0)
&\in-\bigg(\underset{i=1}{\overset{m}{\cart}}L_i\bigg)
\Bigg(\partial(f^*\infconv h^*)
\Bigg(z-\sum_{j=1}^mL_j^*\overline{v_j}\Bigg)\Bigg)+
\underset{i=1}{\overset{m}{\cart}}
\partial\big(g_i^*+\ell_i^*+\scal{\cdot}{r_i}\big)
(\overline{v_i})\nonumber\\
&=-\bigg(\bigoplus_{i=1}^mL_i^*\bigg)^*
\bigg(\partial(f^*\infconv h^*)
\bigg(z-\bigg(\bigoplus_{i=1}^mL_i^*\bigg)
(\overline{v_1},\ldots,\overline{v_m})\bigg)\bigg)
\nonumber\\
&\quad\;+\partial\bigg(\bigoplus_{i=1}^m\big(g_i^*+\ell_i^*
+\scal{\cdot}{r_i}\big)\bigg)(\overline{v_1},\ldots,\overline{v_m})
\nonumber\\
&\subset\partial\bigg((f^*\infconv h^*)
\bigg(z-\bigg(\bigoplus_{i=1}^mL_i^*\bigg)\cdot\bigg)
+\bigoplus_{i=1}^m\big(g_i^*+\ell_i^*
+\scal{\cdot}{r_i}\big)\bigg)(\overline{v_1},\ldots,\overline{v_m}).
\end{align}
In other words, by Fermat's rule, 
$(\overline{v_1},\ldots,\overline{v_m})$ solves \eqref{e:dualvar}.
Finally, the strong convergence claims in \ref{t:2iie} and 
\ref{t:2iif} follow from Theorem~\ref{t:1}\ref{t:1iie}\&\ref{t:1iif}
since the uniform convexity of a 
function $\varphi\in\Gamma_0(\HH)$ at a point of the domain of 
$\partial\varphi$ implies the uniform monotonicity of 
$\partial\varphi$ at that point \cite[Section~3.4]{Zali02}.
\end{proof}

In the following proposition we give conditions under which 
\eqref{e:2011-06-27c} is satisfied.

\begin{proposition}
\label{p:heidelberg2011-07-06}
Suppose that \eqref{e:primalvar} has at least one solution and
set 
\begin{equation}
\label{e:gennad2011-08-06a}
S=\menge{(L_ix-y_i)_{1\leq i\leq m}}
{x\in\dom f\;\text{and}\;(\forall i\in\{1,\ldots,m\})\;\:y_i\in
\dom g_i+\dom\ell_i}.
\end{equation}
Then \eqref{e:2011-06-27c} is satisfied if one of the following 
holds.
\begin{enumerate}
\item
\label{p:heidelberg2011-07-06i}
$(r_1,\ldots,r_m)\in\sri S$.
\item
\label{p:heidelberg2011-07-06ii}
For every $i\in\{1,\ldots,m\}$, $g_i$ or $\ell_i$ is real-valued.
\item
\label{p:heidelberg2011-07-06iii}
$\HH$ and $(\GG_i)_{1\leq i\leq m}$ are finite-dimensional, 
and there exists $x\in\reli\dom f$ such that 
\begin{equation}
\label{e:gennad2011-08-06b}
(\forall i\in\{1,\ldots,m\})\quad
L_ix-r_i\in\reli\dom g_i+\reli\dom\ell_i.
\end{equation}
\end{enumerate}
\end{proposition}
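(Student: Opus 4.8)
The plan is to verify that condition \eqref{e:2011-06-27c} is an instance of the abstract range condition studied via subdifferential calculus, and to obtain each sufficient condition by invoking a standard constraint qualification that guarantees the subdifferential sum rule. First I would observe that, since \eqref{e:primalvar} has a solution $\overline{x}$, Fermat's rule gives $0\in\partial\Phi(\overline{x})$, where $\Phi=f+h-\scal{\cdot}{z}+\sum_{i=1}^m(g_i\infconv\ell_i)\circ(L_i\cdot-r_i)$. The goal is then to ``split'' this single subdifferential inclusion, i.e.\ to show $\partial\Phi(\overline{x})\supset\partial f(\overline{x})+\nabla h(\overline{x})-z+\sum_{i=1}^mL_i^*\big((\partial g_i\infconv\partial\ell_i)(L_i\overline{x}-r_i)\big)$ is not the relevant direction — rather, what is needed is the reverse-type decomposition supplying an actual element: one must produce $u\in\partial f(\overline{x})$, $w_i\in(\partial g_i\infconv\partial\ell_i)(L_i\overline{x}-r_i)$ with $z=u+\nabla h(\overline{x})+\sum_iL_i^*w_i$, which is exactly \eqref{e:2011-06-27c}. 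Since $\dom\ell_i^*=\GG_i$ and $\ell_i^*$ is differentiable (as noted after \eqref{e:2011-06-27m} in the proof of Theorem~\ref{t:1}), we have by \cite[Proposition~24.27]{Livre1} that $\partial g_i\infconv\partial\ell_i=\partial(g_i\infconv\ell_i)$, so the target inclusion reads $0\in\partial f(\overline{x})+\nabla h(\overline{x})-z+\sum_{i=1}^mL_i^*\partial(g_i\infconv\ell_i)(L_i\overline{x}-r_i)$, and it suffices to establish the exact sum rule $\partial\Phi=\partial f+\nabla h-z+\sum_{i=1}^mL_i^*\circ\big(\partial(g_i\infconv\ell_i)\circ(L_i\cdot-r_i)\big)$ at $\overline{x}$.

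For part \ref{p:heidelberg2011-07-06i}, I would set $G\colon(y_i)_{1\leq i\leq m}\mapsto\sum_{i=1}^m g_i\infconv\ell_i(y_i)=\bigoplus_{i=1}^m(g_i\infconv\ell_i)$ on $\GG_1\oplus\cdots\oplus\GG_m$, and $L\colon x\mapsto(L_ix)_{1\leq i\leq m}$, so that $\Phi=f+h-\scal{\cdot}{z}+G\circ(L\cdot-(r_i)_i)$. Since $\dom G=\sum$-type domain $=\Cart_{i=1}^m(\dom g_i+\dom\ell_i)$ (because $\dom(g_i\infconv\ell_i)=\dom g_i+\dom\ell_i$) and $L(\dom f)-(r_i)_i$ has the set $S-(r_1,\ldots,r_m)$ appearing in \eqref{e:gennad2011-08-06a}, the hypothesis $(r_1,\ldots,r_m)\in\sri S$ is precisely the Attouch--Br\'ezis-type qualification $0\in\sri(\dom G-(L(\dom f)-(r_i)_i))$. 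This yields $\partial\big(G\circ(L\cdot-(r_i)_i)\big)=L^*\circ\partial G\circ(L\cdot-(r_i)_i)$ by \cite[Theorem~16.37 / Proposition~16.5]{Livre1}, and $\partial G=\Cart_i\partial(g_i\infconv\ell_i)$ by the separability (\cite[Proposition~16.8]{Livre1}); combined with $\partial(f+h-\scal{\cdot}{z})=\partial f+\nabla h-z$ (which is \eqref{e:2011-06-28c}, valid since $h$ is finite and continuous), and decomposing $L^*=\bigoplus L_i^*$ via \eqref{e:2011-06-30a}, the sum rule holds and \eqref{e:2011-06-27c} follows from $0\in\partial\Phi(\overline{x})$.

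Part \ref{p:heidelberg2011-07-06ii} is the easy corollary: if $g_i$ or $\ell_i$ is real-valued (finite everywhere), then $\dom g_i+\dom\ell_i=\GG_i$, so $S=\GG_1\oplus\cdots\oplus\GG_m$ (as $L_i\dom f-y_i$ ranges over all of $\GG_i$ once $y_i$ is free), hence $(r_1,\ldots,r_m)\in\GG_1\oplus\cdots\oplus\GG_m=\sri S$ trivially, and \ref{p:heidelberg2011-07-06i} applies. For part \ref{p:heidelberg2011-07-06iii}, in finite dimensions $\sri$ coincides with $\reli$, and the qualification $0\in\reli(\dom G-(L\dom f-(r_i)_i))$ is equivalent to the existence of $x\in\reli\dom f$ with $L_ix-r_i\in\reli(\dom g_i+\dom\ell_i)=\reli\dom g_i+\reli\dom\ell_i$ for each $i$ (using $\reli(A+B)=\reli A+\reli B$ and $\reli\Cart=\Cart\reli$); thus \eqref{e:gennad2011-08-06b} gives \ref{p:heidelberg2011-07-06i}, and again \eqref{e:2011-06-27c} follows. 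The main obstacle is purely bookkeeping: correctly identifying the domain of the infimal convolution $g_i\infconv\ell_i$ as $\dom g_i+\dom\ell_i$ and threading the product-space reductions so that the abstract qualification $0\in\sri(\dom G-L(\dom f)+(r_i)_i)$ unwinds to the stated condition on $S$; no genuine analytic difficulty arises beyond citing the standard sum and chain rules for subdifferentials from \cite{Livre1}.
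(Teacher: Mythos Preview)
Your proposal is correct and follows essentially the same route as the paper's proof: both start from Fermat's rule at a minimizer $\overline{x}$, use $\partial(g_i\infconv\ell_i)=\partial g_i\infconv\partial\ell_i$ (via $\dom\ell_i^*=\GG_i$), rewrite $S$ in product-space form as $L(\dom f)-\dom G$ with $G=\bigoplus_i(g_i\infconv\ell_i)$, and then invoke the Attouch--Br\'ezis qualification $0\in\sri(L(\dom f)-\dom G-(r_i)_i)$ to obtain the exact sum/chain rule from \cite[Theorem~16.37]{Livre1}; parts \ref{p:heidelberg2011-07-06ii} and \ref{p:heidelberg2011-07-06iii} are then reduced to \ref{p:heidelberg2011-07-06i} exactly as in the paper. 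The only cosmetic difference is that the paper records the intermediate identity \eqref{e:gennad2011-08-06c} for $S$ explicitly before branching into the three cases, whereas you weave this identification into the argument for \ref{p:heidelberg2011-07-06i}.
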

\begin{proof}
It follows from \eqref{e:gennad2011-08-06a} and
\cite[Proposition~12.6(ii)]{Livre1} that
\begin{align}
\label{e:gennad2011-08-06c}
S
&=\menge{(L_ix-y_i)_{1\leq i\leq m}}{x\in\dom f\;\text{and}\;
(\forall i\in\{1,\ldots,m\})\;\:y_i\in\dom(g_i\infconv\ell_i)}
\nonumber\\
&=\Menge{(L_ix-y_i)_{1\leq i\leq m}}
{x\in\dom(f+h-\scal{\cdot}{z})\;\text{and}\;
(y_i)_{1\leq i\leq m}\in
\underset{i=1}{\overset{m}{\Cart}}\dom(g_i\infconv\ell_i)}
\nonumber\\
&=\bigg(\underset{i=1}{\overset{m}{\Cart}}L_i\bigg)
\Big(\dom\big(f+h-\scal{\cdot}{z}\big)\Big)-
\dom\bigoplus_{i=1}^m(g_i\infconv\ell_i).
\end{align}

\ref{p:heidelberg2011-07-06i}:
In view of \eqref{e:gennad2011-08-06c}, 
\begin{multline}
\label{e:heidelberg2011-07-07a}
(r_1,\ldots,r_m)\in\sri S\\
\Rightarrow\quad
(0,\ldots,0)\in\sri\bigg(\bigg(\underset{i=1}{\overset{m}{\Cart}}
L_i\bigg)\Big(\dom\big(f+h-\scal{\cdot}{z}\big)\Big)-
\dom\bigoplus_{i=1}^m\,(g_i\infconv\ell_i)(\cdot-r_i)\bigg).
\end{multline}
Hence, since $(\cart_{\!i=1}^{\!m}L_i)^*=\bigoplus_{i=1}^mL_i^*$, it 
follows from \eqref{e:2011-06-28a}, \eqref{e:2011-06-28c}, and 
\cite[Theorem~16.37(i)]{Livre1} that
\begin{align}
\label{e:heidelberg2011-07-07b}
\partial f+\sum_{i=1}^mL_i^*(\partial g_i\infconv\partial\ell_i)
(L_i\cdot-r_i)+\nabla h-z
&=\partial\big(f+h-\scal{\cdot}{z}\big)+
\sum_{i=1}^mL_i^*\big(\partial(g_i\infconv\ell_i)\big)(L_i\cdot-r_i)
\nonumber\\
&=\partial\bigg(f+h-\scal{\cdot}{z}+
\sum_{i=1}^m(g_i\infconv\ell_i)\circ(L_i\cdot-r_i)\bigg).
\end{align}
Since \eqref{e:primalvar} has at least one solution it follows 
from Fermat's rule that $0$ is in the range of the right-hand 
side of \eqref{e:heidelberg2011-07-07b}, which shows that 
\eqref{e:2011-06-27c} holds.

\ref{p:heidelberg2011-07-06ii}$\Rightarrow$%
\ref{p:heidelberg2011-07-06i}: We have
$(\forall i\in\{1,\ldots,m\})$
$\dom g_i+\dom\ell_i=\GG_i$. Therefore
\eqref{e:gennad2011-08-06a} yields $S=\bigoplus_{i=1}^m\GG_i$.

\ref{p:heidelberg2011-07-06iii}$\Rightarrow$%
\ref{p:heidelberg2011-07-06i}: We have $\sri S=\reli S$. However, 
it follows from \eqref{e:gennad2011-08-06c} and
\cite[Corollary~6.15]{Livre1} that 
\begin{align}
\label{e:2011-07-08a}
\reli S
&=
\reli\bigg(\bigg(\underset{i=1}
{\overset{m}{\Cart}}L_i\bigg)\Big(\dom\big(f+h-\scal{\cdot}{z}\big)
\Big)-\dom\bigoplus_{i=1}^m\,(g_i\infconv\ell_i)\bigg) 
\nonumber\\
&=
\reli\bigg(\underset{i=1}{\overset{m}{\Cart}}
L_i\bigg)\big(\dom f\big)-\reli
\dom\bigoplus_{i=1}^m\,(g_i\infconv\ell_i)
\nonumber\\
&=
\bigg(\underset{i=1}{\overset{m}{\Cart}}
L_i\bigg)\big(\reli\dom f\big)-\underset{i=1}{\overset{m}{\Cart}}
\reli\dom(g_i\infconv\ell_i)
\nonumber\\
&=
\bigg(\underset{i=1}{\overset{m}{\Cart}}
L_i\bigg)\big(\reli\dom f\big)-\underset{i=1}{\overset{m}{\Cart}}
\big(\reli\dom g_i+\reli\dom\ell_i\big).
\end{align}
Hence $(r_1,\ldots,r_m)\in\sri S$ $\Leftrightarrow$
$(\exi x\in\reli\dom f)(\forall i\in\{1,\ldots,m\})$
$L_ix-r_i\in\reli\dom g_i+\reli\dom\ell_i$.
\end{proof}

\begin{remark}
In Problem~\ref{prob:2}, if each function
$\ell_i$ is the indicator function
of $\{0\}$, then \eqref{e:primalvar} reduces to
\begin{equation}
\minimize{x\in\HH}{f(x)+\sum_{i=1}^m\,g_i
(L_ix-r_i)+h(x)-\scal{x}{z}}.
\end{equation}
Even in this special case, the algorithm resulting from 
\eqref{e:blackpagepart3} is new. This observation remains valid 
if we further assume that $h\colon x\mapsto 0$.
\end{remark}

\end{document}